\newtheorem{thm}{Theorem}
\newtheorem{lem}{Lemma}
\newtheorem{prop}{Proposition}
\newtheorem{cor}{Corollary}
\theoremstyle{definition}
\newcommand{\N}{\mathbb{N}}
\newcommand{\R}{\mathbb{R}}
\newcommand{\E}{\mathbb{E}}
\newcommand{\Q}{\mathbb{Q}}
\renewcommand{\P}{\mathbb{P}}
\newcommand{\sgn}{\mathrm{sgn}}
\newcommand{\Lcal}{\mathcal{L}}
\begin{document}

\title{Derivation of mean-field equations for stochastic particle systems}

\author[1]{Stefan Grosskinsky}
\author[2]{Watthanan Jatuviriyapornchai}
\affil[1]{Mathematics Institute, University of Warwick, Coventry CV4 7AL, UK}
\affil[2]{Department of Mathematics, Faculty of Science, Mahidol University, Thailand}
\affil[2]{Centre of Excellence in Mathematics, Commission on higher Education, Thailand}

\maketitle

\begin{abstract}
We study stochastic particle systems on a complete graph and derive effective mean-field rate equations in the limit of diverging system size, which are also known from cluster aggregation models. 
We establish the propagation of chaos under generic growth conditions on particle jump rates, and the limit provides a master equation for the single site dynamics of the particle system, which is a non-linear birth death chain. Conservation of mass in the particle system leads to conservation of the first moment for the limit dynamics, and to non-uniqueness of stationary
distributions. Our findings are consistent with recent results on exchange driven growth, and provide a connection between the well studied phenomena of gelation and condensation.

%We study the single site dynamics in stochastic particle systems of misanthrope type with Lipschitz continuous rates on a complete graph. In the limit of diverging system size, we establish convergence to a Markovian non-linear birth death chain, described by a mean-field equation known also from exchange-driven growth processes. Conservation of mass in the particle system leads to conservation of the first moment for the limit dynamics, and to non-uniqueness of stationary measures. We also discuss the dynamics of two interesting models that exhibit condensation transitions and their connection to exchange-driven growth processes.
%The proof is based on a coupling to branching processes via the graphical construction, and establishing uniqueness of the solution for the limit dynamics. As particularly interesting examples we discuss the dynamics of two models that exhibit a condensation transition and their connection to exchange-driven growth processes.
\end{abstract}

\noindent
\textbf{Keywords.} mean-field equations, propagation of chaos, misanthrope processes, non-linear birth death chain, condensation.

\section{Introduction}

In the physics literature, stochastic particle systems in a limit of large system size are often described by a mean-field master equation for the time evolution of a single lattice site \cite{godreche2003dynamics,godreche2005dynamics,evans2014condensation}. For conservative systems, these equations are very similar to mean-field rate or kinetic equations in the study of cluster growth models (see e.g.\ \cite{krapivsky2010kinetic} and references therein). We focus on particle systems where only one particle jumps at a time, which corresponds to monomer exchange in cluster growth models as studied in \cite{ben2003exchange}, and also in the well-known Becker-D\"oring model \cite{becker1935kinetische,ball1986becker}. 
While these mean-field equations often provide the starting point for the analysis of stochastic particle systems in the physics literature (see e.g.\ \cite{godreche2003dynamics,godreche2005dynamics}) and have an intuitive form, to our knowledge their connection to underlying particle systems has not been rigorously studied so far. 

In this paper, we establish a law of large numbers for empirical measures and provide a rigorous derivation of this equation for misanthrope-type processes \cite{cocozza1985processus} with jump rates bounded by a bilinear function and a general class of initial conditions on a complete graph. The limit equation describes the dynamics of the fraction $f_k (t)\in [0,1]$ of lattice sites with a given occupation number $k$, and also provides the master equation of a birth death chain for the limiting single site dynamics of the process under additional symmetry assumptions on the initial condition. In this case our main result further implies the propagation of chaos and asymptotic independence of single site processes (see e.g.\ \cite{daipra2017}). Note that no time rescaling is required and the limiting dynamics are non-linear, i.e. the birth and death rates of the chain depend on the distribution $f_k (t)$ as a result of the mean-field interaction of the particle system on the complete graph. Even though the limiting birth death dynamics is irreducible under non-degenerate initial conditions, the non-linearity leads to conservation of the first moment of the initial distribution, resulting in a continuous family of stationary distributions, as has been observed before for other non-linear birth death chains (see e.g.\ \cite{karlin1957differential}). 
%To establish the mean-field property in the limit, we show the asymptotic decay of correlations by bounding percolation clusters in the graphical construction of the process with branching processes up to finite times, similar to \cite{armendariz2015,fernandez2001loss}.
Existence of limits follows from standard tightness arguments, and the deterministic limit equation arises from a vanishing martingale part for the empirical processes. Previous results along these lines in the context of fluid limits include stochastic hybrid systems \cite{pakdaman2010fluid}, interacting diffusions \cite{daipra2017}, and a particular zero-range process using a different technique \cite{graham2009rate}. Our proof also includes a simple uniqueness argument for solutions of the limit equation similar to \cite{ball1986becker} and recent work \cite{esenturk2017}. 
%While uniqueness has been establish for more complicated coagulation fragmentation models \cite{ball1986becker}, we could not find a result covering our case in the literature.

Under certain conditions on the jump rates, stochastic particle systems
can exhibit a condensation transition where a non-zero fraction of
all particles accumulates in a condensate, provided the particle density
exceeds a critical value $\rho_c$. Condensing models with homogeneous stationary product measures have attracted significant research interest (see e.g. \cite{chleboun2014condensation,evans2014condensation}
for recent summaries), including zero-range processes of the type
introduced in \cite{drouffe1998simple,evans2000phase}, inclusion
processes with a rescaled system parameter \cite{grosskinsky2013dynamics,cao2014dynamics}
and explosive condensation models \cite{waclaw2012explosive,chau2015explosive}. While the stationary measures have been understood in great detail on a rigorous level \cite{chleboun2014condensation,jeon2000size,grosskinsky2003condensation,armendariz2009thermodynamic,armendariz2013zero},
the dynamics of these processes continue to pose interesting mathematical questions. First recent results for zero-range and inclusion processes have been obtained on metastability in the stationary dynamics of the condensate location \cite{beltran2012metastability,armendariz2015metastability,bianchi2016metastability}, approach to stationarity on fixed lattices under diverging particle density \cite{grosskinsky2013dynamics,beltran2015martingale}, and a hydrodynamic limit for density profiles below the critical value \cite{stamatakis2015hydrodynamic}. 

Our result provides a contribution towards a rigorous understanding of the approach to stationarity in the thermodynamic limit of diverging system size and particle number. This exhibits an interesting coarsening regime characterized by a power-law time evolution of typical observables, which has been identified in previous heuristic results \cite{godreche2003dynamics,grosskinsky2003condensation,chau2015explosive,godreche2016coarsening} also on finite dimensional regular lattices. Condensation implies that stationary measures for the limiting birth death dynamics only exist up to a maximal first moment $\rho_c <\infty$, above which $f_k (t)$ phase separates over time into two parts describing the mass distribution in the condensate and the background of the underlying particle system. Explicit traveling wave scaling solutions for the condensed part of the distribution have been found on a heuristic level in \cite{godreche2003dynamics,jatuviriyapornchai2016coarsening,godreche2016coarsening} for zero-range processes and in \cite{ben2003exchange} for a specific explosive condensation model.

The paper is organized as follows. In Section \ref{sec:notation} we introduce notation and state our main result with the proof given in Section \ref{sec:proof}. In Section \ref{sec:properties} we discuss basic properties of the limit dynamics and its solutions, as well as limitations and possible extensions of our result. We present particular examples of condensing systems in Section \ref{sec:examples} and provide a concluding discussion in Section \ref{sec:discussion}.

\section{Notation and main result \label{sec:notation}}
We consider a stochastic particle system $({\eta}(t):t>0)$ of misanthrope type \cite{cocozza1985processus} on finite lattices $\Lambda$ of size $|\Lambda|=L$. 
Configurations are denoted by ${\eta} =(\eta_x :x\in\Lambda)$ where $\eta_x \in\mathbb{N}_0$ is the number of particles on site $x$, and the state space is denoted by $\Omega =\mathbb{N}_0^{\Lambda}$. 
The dynamics of the process is defined by the infinitesimal generator 
\begin{equation}
\label{eq:GenMis}
(\mathcal{L}g)({\eta})=\sum_{x,y\in\Lambda}q(x,y)c(\eta_{x},\eta_{y})(g({\eta}^{x\rightarrow y})-g({\eta})) .
\end{equation}
Here the usual notation ${\eta}^{x\rightarrow y}$ indicates a configuration where one particle has moved from site $x$ to $y$, i.e. $\eta_{z}^{x\rightarrow y}=\eta_{z}-\delta_{z,x} +\delta_{z,y}$, and $\delta$ is the Kronecker delta. 
Since we focus on finite lattices only, the generator \eqref{eq:GenMis} is defined for all bounded, continuous test functions $g\in C^{b}(\Omega)$. For a general discussion and the construction of the dynamics on infinite lattices see \cite{andjel1982,balazs2007existence}. 

To ensure that the process is non-degenerate, the jump rates satisfy
\begin{equation}\label{cassum}
\left\{ \begin{array}{cl}
c(0,l)=0\;&\mbox{for all }l\geq 0\\
c(k,l)>0\;&\mbox{for all }k>0\;\mbox{and }l\geq 0.
\end{array} \right.
\end{equation}
Our main further assumption on the dynamics is that the rates grow sublinearly, in the sense that they are bounded by a bilinear function
\begin{equation}
\label{eq:lip}
c(k,l)\leq C_1 k (l+ C_2)\quad\mbox{for constants }C_1 ,C_2 >0\ .
\end{equation}
We focus on complete graph dynamics, i.e. $q(x,y)=1/(L-1)$ for all $x \neq y$, and denote by $\mathbb{P}^L$ and $\mathbb{E}^L$ the law and expectation on the path space $D_{[0,\infty)}(\Omega)$ of the process. 
As usual, we use the Borel $\sigma$-algebra for the discrete product topology on $\Omega$, and the smallest $\sigma$-algebra on $D_{[0,\infty)}(\Omega)$ such that $\omega\mapsto\eta_t(\omega)$ is measurable for all $t\geq 0$. We will study the empirical processes $t\mapsto F_k^L ({\eta}(t))$ defined by the test functions
\begin{equation}
F_{k}^L ({\eta}):=\frac{1}{L}\sum_{x\in\Lambda}\delta_{\eta_{x},k} \in [0,1], \label{fk}
\end{equation}
counting the fraction of lattice sites for each occupation number $k\geq 0$. 

In the following, we consider a sequence (in $L$) of initial conditions $\eta (0)=(\eta_x(0):x\in\Lambda)$. We first require the minimal condition that there exists a fixed probability distribution $f(0)$ on $\N_0$ with finite moments
\begin{equation}
\label{initialcon0}
\rho :=m_1 (0):=\sum_{k\geq 1} k f_k (0)<\infty\quad\mbox{and}\quad m_2 (0):=\sum_{k\geq 1} k^2 f_k (0)<\infty\ ,
\end{equation}
such that we have a weak law of large numbers
\begin{equation}
\label{initialcon0b}
F^L_{k}({\eta} (0)) \to f_k (0)\quad\mbox{in distribution for all }k\geq 0\ .
%\{\eta_x (0):x\in\Lambda \}\quad\textrm{are i.i.d.\ with distribution $f^L (0)$ for all $L$}\ ,
\end{equation}

We need further regularity assumptions on the initial conditions, namely a uniform bound of first and second moments,
\begin{align}
\label{initialcon0c}
%\eta (0)&\in\Omega_\alpha :=\Big\{\eta :\frac{1}{L}\sum_{x \in \Lambda}\eta_x (0)\leq \alpha_1 \Big\}\quad\mbox{and}\nonumber\\
%m_2^L (0)&\leq \alpha_2 \quad\mbox{for all }L\geq 1\ ,
\eta (0)\in\Omega_\alpha :=\Big\{\eta :\frac{1}{L}\sum_{x \in \Lambda}\eta_x \leq \alpha_1 ,\ \frac{1}{L}\sum_{x \in \Lambda}\eta_x^2\leq \alpha_2\Big\}\quad\mbox{for all }L\geq 1\ ,
\end{align}
for some fixed $\alpha_1 ,\alpha_2 >0$. This uniform condition could be replaced by a tail condition on the law of $\eta (0)$, but we impose it here to avoid uninteresting complications in the proof. We discuss this issue and specific examples of initial conditions in Section \ref{sec:ergo}. Simple choices that fulfill all conditions are for example product measures with a finite maximal occupation number per site.
%To avoid uninteresting complications, we further assume that the total number of particles and the second moment is bounded uniformly, i.e. there exist $\alpha_1 ,\alpha_2 >0$ such that
%A typical example in $\Omega_\alpha$ with high probability are initial conditions with a fixed number of uniformly distributed particles in the thermodynamic limit, i.e.
%\begin{equation}\label{thermo}
%\sum_{x \in \Lambda}\eta_x (0)=N\quad\mbox{and}\quad N/L\to\rho\geq 0\quad\mbox{as }L\to\infty\ .
%\end{equation}
%Possible generalizations and further examples are discussed in Section \ref{sec:ergo}. 
Note that (\ref{initialcon0c}) and conservation of mass of the dynamics implies that
\begin{equation}
\label{alpha}
%\sum_{k\geq 1} k\, f_k^L (t)
\frac{1}{L}\sum_{x\in\Lambda} \eta_x (t)=\sum_{k\geq 0}k F_k^L (\eta (t))\leq \alpha_1 \ , \quad\P^L -a.s.\ \mbox{for all }t\geq 0\mbox{ and }L\geq 1\ .
\end{equation}

Our main result is then a weak law of large numbers for the empirical processes $t\mapsto F_k^L (\eta (t))$ which holds pointwise in $k$ or, equivalently, in a weak sense, where we use the notation 
\begin{equation}
\langle F^L (\eta ),h\rangle =\sum_{k\geq 0} h_k F^L_k (\eta )\ ,
\end{equation}
for all bounded functions $h:\N_0\to\R$. 

\begin{thm}
\label{thmfactorization}
Consider a process with generator \eqref{eq:GenMis} on the complete graph with sublinear rates (\ref{eq:lip}) and initial conditions satisfying \eqref{initialcon0}, \eqref{initialcon0b} and \eqref{initialcon0c}. 
Then we have a weak law of large numbers, i.e.\ for all bounded $h:\N_0\to\R$, \begin{equation}
\big(\langle F^L (\eta (t)),h\rangle :t\geq 0\big)\to \big(\langle f(t),h\rangle :t\geq 0\big)\quad\mbox{weakly on path space as }L\to\infty\ ,
\end{equation}
where 
$t\mapsto f(t)=(f_k(t):k\in \mathbb{N}_0)$ is the unique global solution of the \textbf{mean-field equation}
\begin{align}
\label{eq:mis_diff_f_k}
\frac{df_{k}(t)}{dt}
&=\sum_{l\geq 0}c(k+1,l)f_l(t)f_{k+1}(t)+\sum_{l\geq 0} c(l,k-1) f_l(t)f_{k-1}(t) \nonumber \\
&\quad-\bigg(\sum_{l\geq 0}c(k,l)f_l(t)+\sum_{l\geq 0} c(l,k)f_l(t)\bigg)f_{k}(t)\quad\mbox{for all }k\geq 0,
\end{align}
with initial condition $f(0)$ given by (\ref{initialcon0b}). 
Here we use the convention $f_{-1}(t)\equiv 0$ for all $t\geq 0$ and recall that $c(0,l)=0$ for all $l\geq 0$.
\end{thm}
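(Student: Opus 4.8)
The plan is to use the standard Dynkin martingale approach to fluid limits, exploiting the mean-field structure of the complete graph. Fix a bounded $h:\N_0\to\R$ and set $G_h^L(\eta):=\langle F^L(\eta),h\rangle$. By Dynkin's formula,
\begin{equation*}
M_t^{L,h}:=G_h^L(\eta(t))-G_h^L(\eta(0))-\int_0^t(\mathcal{L}G_h^L)(\eta(s))\,ds
\end{equation*}
is a martingale. The first step is to compute the drift $(\mathcal{L}F_k^L)(\eta)$ from \eqref{eq:GenMis} and \eqref{fk}. Since a jump from $x$ to $y$ changes only the occupations of those two sites, one has $L\big(F_k^L(\eta^{x\to y})-F_k^L(\eta)\big)=\delta_{\eta_x-1,k}-\delta_{\eta_x,k}+\delta_{\eta_y+1,k}-\delta_{\eta_y,k}$. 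Inserting this into the generator, regrouping the double sum over sites according to their occupation values, and absorbing both the normalisation $q(x,y)=1/(L-1)$ and the $O(1/L)$ correction coming from the constraint $x\neq y$, I expect to recover exactly the right-hand side of \eqref{eq:mis_diff_f_k} with $f$ replaced by $F^L(\eta)$, up to an $O(1/L)$ remainder. Thus the empirical process already satisfies the mean-field equation up to the martingale term and a vanishing correction. The sublinear bound \eqref{eq:lip} together with the mass bound \eqref{alpha} keeps the inner sums $\sum_l c(k,l)F_l^L\le C_1 k(\alpha_1+C_2)$ finite and makes these manipulations rigorous.

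Next I would show the martingale part is negligible. Its predictable quadratic variation equals $\int_0^t\Gamma(G_h^L)(\eta(s))\,ds$ with carr\'e-du-champ operator $\Gamma(G_h^L)=\sum_{x,y}q(x,y)c(\eta_x,\eta_y)\big(G_h^L(\eta^{x\to y})-G_h^L(\eta)\big)^2$. Each increment is of order $\|h\|_\infty/L$, so squaring gives $O(1/L^2)$, while \eqref{eq:lip} and \eqref{alpha} bound $\frac1{L-1}\sum_{x\neq y}c(\eta_x,\eta_y)=O(L)$; hence $\E^L[\langle M^{L,h}\rangle_t]=O(t/L)$ and Doob's inequality forces $\sup_{s\le t}|M_s^{L,h}|\to 0$ in $L^2$. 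Tightness of $\big(G_h^L(\eta(\cdot))\big)_L$ on $D_{[0,\infty)}(\R)$ then follows from the Aldous--Rebolledo criterion: the processes are uniformly bounded for bounded $h$, and both the drift and the quadratic variation are controlled on compact time intervals by the uniform moment bounds. Taking $h_l=\delta_{l,k}$ and passing to a joint diagonal subsequence yields tightness of the full vector $(F_k^L(\eta(\cdot)))_{k\ge0}$, and general bounded $h$ are recovered by approximation with finitely supported ones, using the uniform moment bound to control the tail $\sum_{k>K}F_k^L$.

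Along any convergent subsequence I would then pass to the limit in the Dynkin identity: the initial term converges by \eqref{initialcon0b}, the martingale vanishes by the previous step, and the drift integral should converge to the mean-field drift evaluated at the limit $f$. The delicate point here is that $c(k,l)$ is unbounded in $l$, so convergence of $\sum_l c(k,l)F_l^L$ is not immediate from pointwise convergence of $F_l^L$; I would use the uniform second-moment bound propagated from \eqref{initialcon0c} to obtain uniform integrability in $l$ and justify the interchange of limit and sum. This shows every limit point $f(t)$ solves the integrated weak form of \eqref{eq:mis_diff_f_k}, with global existence following together with the a priori bounds on $m_1(t)$ and $m_2(t)$. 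Finally, uniqueness is obtained by a weighted-$\ell^1$ Gronwall estimate in the spirit of \cite{ball1986becker,esenturk2017}: for two solutions $f,\tilde f$ with the same data and bounded moments one bounds $\frac{d}{dt}\sum_k(1+k)|f_k-\tilde f_k|$ by a constant multiple of itself, using the bilinear bound \eqref{eq:lip} and the moment bounds to close the estimate, so that $f=\tilde f$. Uniqueness makes the limit deterministic and identical along all subsequences, upgrading subsequential convergence to convergence of the whole sequence and completing the weak law of large numbers.

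The main obstacle throughout is the unboundedness of the rates permitted by \eqref{eq:lip}: every step — the generator computation, the $O(L)$ bound on the total rate, the interchange of limit and the $l$-sum, and the Gronwall estimate — requires uniform-in-$L$ and uniform-in-$t$ control of the first and second moments. Establishing and propagating these moment bounds, and thereby ruling out escape of mass to infinity (the mechanism behind gelation and condensation), is therefore the crux of the argument; the complete-graph mean-field structure, which renders the drift an exact functional of the empirical measure, is what keeps the proof otherwise self-contained.
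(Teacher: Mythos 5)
Your proposal is correct and follows essentially the same route as the paper: the Dynkin/It\^o martingale decomposition with drift equal to the right-hand side of \eqref{eq:mis_diff_f_k} evaluated at $F^L(\eta)$ up to $O(1/L)$ diagonal corrections, a quadratic variation of order $t/L$, Aldous-type tightness, identification of limit points as solutions of the weak equation, and uniqueness via the weighted-$\ell^1$ Gronwall estimate $\sum_k(1+k)|f_k-\tilde f_k|$ in the spirit of Ball--Carr--Penrose. The only notable (and harmless) variations are that you bound the carr\'e du champ by jump-size-squared times total rate, which needs only the conserved first moment, where the paper computes $\mathcal{L}H^2-2H\mathcal{L}H$ explicitly and invokes its Lemma~\ref{lem} Gronwall bound on $m_2^L(t)$ for the diagonal terms, and that you make explicit the uniform-integrability step for interchanging the limit with the unbounded $l$-sum, which the paper leaves implicit.
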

Note that this result implies in particular existence and uniqueness of the solution to \eqref{eq:mis_diff_f_k} for all $t\geq 0$. 
Let us denote expectations by
\begin{equation}
f_{k}^L(t):=\mathbb{E}^L \big [F_{k}^L ({\eta} (t))\big]=\frac{1}{L}\sum_{x \in \Lambda}\mathbb{P}^L[\eta_x(t)=k]\in [0,1],
\label{eq:f^L}
\end{equation}
and write $f^L(t)=(f_{k}^L (t): k\in \mathbb{N}_0)$. 
Note that $f^L (t)$ are probability distributions on $\mathbb{N}_0$ for all $t\geq 0$, and with \eqref{initialcon0b} we have $f^L_k (0)\to f_k (0)$ for all $k\geq 0$. 
Theorem \ref{thmfactorization} then implies in particular also convergence $f^L_k (t)\to f_k (t)$ for all $t\geq 0$.\\

Suppose a further symmetry assumption on the initial conditions, i.e.\ for each $L\geq 1$,
\begin{equation}
\label{initialcon1}
\textrm{the distribution of}\quad\{\eta_x (0):x\in\Lambda \}\quad\textrm{is invariant under permutation of lattice sites}\ .
%converge to i.i.d. random variables with distribution $f (0)$}\ ,
\end{equation}
Then by symmetry of the dynamics, this holds also for the full process $(\eta (t):t\geq 0)$ and in particular at all fixed times $t\geq 0$. 
Then the weak law of large numbers for the empirical measures in Theorem \ref{thmfactorization} implies that for all $m\geq 1$, and $t\geq 0$ as $L\to \infty$
\[
(\eta_1(t),\eta_2(t),\ldots ,\eta_m(t))\quad\mbox{converge weakly to iidrv's with distribution } f(t)\ .
\]
This is a standard result in propagation of chaos and a recent exposition of a proof can be found in \cite{daipra2017}. 
%Under a further technical assumption on a uniform bound of the third moment
%\begin{equation}\label{3rdmo}
%m_3^L (0):=\E^L \big[\eta_x (0)^3\big] =\sum_{k\geq 0} k^3 f_k^L (0)\leq \alpha_3 \quad\mbox{for all }L\geq 1\ ,
%\end{equation}
Since the law of large numbers in Theorem \ref{thmfactorization} holds not only for time marginals but for the full process, we can also establish propagation of chaos on the level of processes.

\begin{cor}
\label{cor1}
Consider the process with generator \eqref{eq:GenMis} and conditions as in Theorem \ref{thmfactorization} together with \eqref{initialcon1}. Propagation of chaos holds, i.e. for all $m\geq 1$, and $T\geq 0$ as $L\to \infty$ the finite dimensional processes $\big( (\eta_1(t),\eta_2(t),\ldots ,\eta_m(t)):t\in [0,T]\big)$ converge weakly on path space to independent, identical birth death chains on $\N_0$ with distribution $f(t)$ and master equation given by (\ref{eq:mis_diff_f_k}).
% random variables with distribution $f(t)$ as given in Theorem \ref{thmfactorization}. 
\end{cor}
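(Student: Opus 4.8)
The goal is to prove Corollary~\ref{cor1}: propagation of chaos on path space. The plan is to leverage the full-process convergence already established in Theorem~\ref{thmfactorization} together with the permutation symmetry~\eqref{initialcon1}, and to upgrade the single-time weak convergence of finite marginals into joint weak convergence of the finite-dimensional coordinate \emph{processes} on path space $D_{[0,T]}(\N_0^m)$.

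First I would fix $m\geq 1$ and $T\geq 0$, and exploit exchangeability: by~\eqref{initialcon1} and the permutation-invariance of the complete-graph generator~\eqref{eq:GenMis}, the law of the process $(\eta(t):t\geq 0)$ is invariant under relabelling of sites, so the marginal law of $\big((\eta_1(t),\ldots,\eta_m(t)):t\in[0,T]\big)$ does not depend on the chosen labels and can be written as a symmetric functional of the empirical measure. The key observation is that for exchangeable particle systems, joint laws of finitely many tagged coordinates are determined asymptotically by the empirical distribution; concretely, for bounded continuous test functionals the $m$-point correlations factorize into products of the one-point empirical average plus a correction of order $O(1/L)$ arising from sampling without replacement on the complete graph. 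I would make this precise by computing, for product test functions $\prod_{i=1}^m g_i(\eta_{x_i}(t_1),\ldots)$ evaluated at finitely many times, the expectation in terms of $\langle F^L(\eta),h\rangle$-type observables and showing the difference between sampling distinct sites and i.i.d.\ sampling from the empirical measure vanishes as $L\to\infty$.

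The second and central step is to identify the limit of each tagged coordinate as the birth death chain with master equation~\eqref{eq:mis_diff_f_k}. Here I would argue that a single tagged site $\eta_1(t)$, under the complete-graph dynamics, feels the other sites only through the empirical measure $F^L(\eta(t))$: the generator acting on functions of $\eta_1$ alone has jump rate from $k$ to $k+1$ equal to $\sum_{l}c(l,k)F^L_l(\eta(t))$ and from $k$ to $k-1$ equal to $\sum_l c(k,l)F^L_l(\eta(t))$, up to $O(1/L)$ self-interaction corrections. Since Theorem~\ref{thmfactorization} gives $\langle F^L(\eta(t)),h\rangle\to\langle f(t),h\rangle$ weakly on path space, these rates converge to the deterministic, time-dependent rates defining the nonlinear birth death chain, and the tagged coordinate converges to that chain. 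For the joint law of $m$ coordinates, asymptotic independence follows because the shared interaction field $F^L(\eta(t))$ becomes deterministic in the limit, so conditionally on this field the $m$ tagged chains are asymptotically independent and each converges to the same limit law $f(t)$.

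To assemble path-space convergence rather than mere finite-dimensional convergence I would verify tightness of the laws of $\big((\eta_1,\ldots,\eta_m):t\in[0,T]\big)$ in $D_{[0,T]}(\N_0^m)$ and identify all subsequential limits via the martingale problem for the limiting product generator, whose uniqueness is guaranteed by the uniqueness of solutions to~\eqref{eq:mis_diff_f_k} from Theorem~\ref{thmfactorization}. Tightness follows from the uniform moment bound~\eqref{alpha} controlling jump frequencies, together with an Aldous-type criterion; this is routine given the sublinear rate bound~\eqref{eq:lip}. I expect the main obstacle to be the quantitative control of the self-interaction and sampling-without-replacement corrections that distinguish genuine tagged-particle dynamics from i.i.d.\ sampling of the empirical measure, and showing these corrections are uniformly $O(1/L)$ over the time interval $[0,T]$ so that the factorization and the convergence of effective rates hold jointly; once this is in place the propagation of chaos statement follows from the standard machinery described in~\cite{daipra2017}.
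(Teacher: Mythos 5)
Your strategy is sound, and it is a genuinely different route from the paper's. The paper never analyzes tagged coordinates directly: it defines the empirical measure $\hat\Q^L_{[0,T]}=\frac{1}{L}\sum_{x}\delta_{\eta_x[0,T]}$ on path space $D_{[0,T]}(\N_0)$, observes that its time marginals are exactly $F^L(\eta(t))$ so that Theorem \ref{thmfactorization} gives convergence of marginals, identifies finite-dimensional distributions through convergence of the propagators $T_f(t_1,t_2)$ associated with \eqref{eq:mis_diff_f_k} (deduced from convergence of the full processes), and then invokes the standard equivalence for exchangeable systems between a law of large numbers for the path-space empirical measure and chaoticity \cite{daipra2017}. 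You instead work with the tagged coordinates themselves: effective rates $\sum_l c(l,k)F^L_l(\eta(t))$ and $\sum_l c(k,l)F^L_l(\eta(t))$ up to $O(1/L)$ corrections, tightness in $D_{[0,T]}(\N_0^m)$, and identification of subsequential limits via the martingale problem for the product generator $\sum_{i=1}^m \Lcal^{(i)}_{f(t)}$, with asymptotic independence coming from the deterministic limit field together with the $O(1/L)$ direct exchange rate $c(\eta_i,\eta_j)/(L-1)$ between tagged sites. Both routes consume Theorem \ref{thmfactorization} as the essential input; the paper's buys brevity (chaos for every $m$ at once, no tagged-site generator computation), while yours is more self-contained and makes the limiting single-site dynamics explicit --- indeed the paper remarks in passing that the generator \eqref{ssgencon} ``can also be derived as the limit of the original full generator applied to a test function of one variable,'' which is precisely your route, but it does not carry it out.

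Two points in your sketch need genuine repair. First, convergence of the effective rates does not follow directly from Theorem \ref{thmfactorization}: the theorem gives $\langle F^L(\eta(t)),h\rangle\to\langle f(t),h\rangle$ only for \emph{bounded} $h$, whereas the rate functionals $l\mapsto c(l,k)$ and $l\mapsto c(k,l)$ grow linearly in $l$ by \eqref{eq:lip}. You need a uniform-integrability step: by \eqref{alpha} and Lemma \ref{lem}, $\E^L\big[\sum_{l>M} l\, F^L_l(\eta(t))\big]\leq m_2^L(t)/M\leq (\alpha_2+Ct)e^{Ct}/M$ uniformly in $L$, so truncating the rate sums at level $M$ and sending $M\to\infty$ after $L\to\infty$ closes the gap; as written, the central assertion ``these rates converge'' is unjustified. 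Second, uniqueness of the limiting martingale problem is not ``guaranteed by the uniqueness of solutions to \eqref{eq:mis_diff_f_k}'': uniqueness of $f$ fixes the time-dependent rates \eqref{bdrates}, but you still need well-posedness (in particular non-explosion) of the resulting time-inhomogeneous birth death chain; this does hold here because $\beta_k(t)\leq C_1(k+C_2)\rho$ and $\mu_k(t)\leq C_1 k(\rho+C_2)$ are at most linear in $k$, but it is a separate argument from uniqueness of the mean-field equation. Finally, note that ``conditionally on the field the chains are asymptotically independent'' is only heuristic at finite $L$ (the field is itself a function of the tagged coordinates); your martingale-problem formulation with the product generator is the correct rigorous substitute, so lean on that rather than on conditional independence.
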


As we discuss in detail in Section \ref{sec:cons}, $\frac{d}{dt}\sum_{k\geq 0}f_{k}(t)=0$, and with normalized initial condition $f(0)$ the limit equation \eqref{eq:mis_diff_f_k} is indeed the master equation of a birth death chain with state space $\mathbb{N}_0$, birth rate $\sum_{l\geq 0}c(l,k)f_l(t)$ and death rate $\sum_{l\geq 0}c(k,l)f_l(t)$. Note that the chain and its master equation are non-linear since the birth and death rates depend on the distribution $f(t)$. 
%The proof is standard from the weak law of large numbers for the empirical measures in Theorem \ref{thmfactorization}, and a recent exposition can be found in \cite{daipra2017}. 
The symmetry \eqref{initialcon1} for the initial conditions is for example fulfilled by conditional product measures as discussed in Section \ref{sec:ergo}. 
Corollary \ref{cor1} in particular implies convergence for the single site process with state space $\mathbb{N}_0$,
\begin{equation}\label{ssd}
(\eta_x (t):t\geq 0)\quad\mbox{for any fixed }x\in\Lambda\;\mbox{(with $\Lambda$ big enough)}.
\end{equation}

%\begin{cor}\label{cor2}
%	Under the assumptions of Corollary \ref{cor1}, the single site process \eqref{ssd} converges weakly on path space $D_{[0,\infty)}(\mathbb{N}_0)$ to a (non-linear) birth death chain with distribution $f(t)$ and master equation given by (\ref{eq:mis_diff_f_k}).
%\end{cor}

\section{Proof of the main result\label{sec:proof}}

The proof follows a standard approach. We first establish existence of limit processes via a tightness argument, then characterize all limits as solutions of \eqref{eq:mis_diff_f_k} and finally show that \eqref{eq:mis_diff_f_k} has a unique global solution for a given initial condition.

As a first step we collect some useful results on moments and establish a time-dependent bound on the second moment, making use of the conservation of mass as given in \eqref{alpha}.
Recall the definition \eqref{eq:f^L} of expected empirical measures, and note that their time evolution is given by
\begin{equation}
\frac{d}{dt}f_k^L (t) =\frac{d}{dt}\mathbb{E} \big[ F_k ({\eta}(t) )\big] =\mathbb{E} \big[(\mathcal{L} F_k )({\eta}(t) )\big]\ .
\label{evo}
\end{equation}
As usual this equation is not closed, since the right-hand side is not a function of $f^L(t)$ due to correlations. 
For any integer $n\geq 0$ denote the $n$-th moment by
\begin{equation}\label{lmoment}
m_n^L (t):=\E^L \Big[\frac{1}{L}\sum_{x\in\Lambda} \big(\eta_x (t)\big)^n\Big] =\sum_{k\geq 0} k^n f_k^L (t)\ .
\end{equation}
By \eqref{initialcon0b}, we have $m_1^L (0)\to\rho$ and $m_2^L (0)\to m_2 (0)<\infty$. The uniform conditions \eqref{initialcon0c} on the moments further imply for all $L\geq 1$ that $m_2^L (0)\leq \alpha_2$, and with conservation of mass \eqref{alpha} we have $m_1^L (t)\leq\alpha_1$ for all $t\geq 0$, while the second moment can grow in time.

\begin{lem}\label{lem}
Under the conditions of Theorem \ref{thmfactorization}, there exists a constant $C>0$ such that
\begin{equation}\label{eq:lem}
m_2^L (t)\leq (\alpha_2 +Ct)e^{Ct}\quad\mbox{for all }t\geq 0\mbox{ and }L\geq 1\ .
\end{equation}
\end{lem}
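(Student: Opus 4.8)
The plan is to apply the generator \eqref{eq:GenMis} to the empirical second-moment test function $g(\eta):=\tfrac1L\sum_{x\in\Lambda}\eta_x^2$, whose expectation is precisely $m_2^L(t)$, in order to derive a differential inequality $\tfrac{d}{dt}m_2^L(t)\le a\,m_2^L(t)+b$ with constants $a,b$ independent of $L$ and $t$, and then to close the estimate with Gronwall's inequality. A preliminary remark makes all manipulations rigorous: since the dynamics conserves the total number of particles and \eqref{initialcon0c} forces $\sum_{x}\eta_x(0)\le L\alpha_1$ almost surely, each coordinate satisfies $\eta_x(t)\le L\alpha_1$ for all $t$, so for every fixed $L$ the process lives on a finite state space and is non-explosive. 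Hence $g$ is bounded there and the Dynkin identity \eqref{evo} applies to $g$ without further justification.

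First I would compute the one-particle increment: moving a particle from $x$ to $y$ changes $g$ by
\[
g(\eta^{x\to y})-g(\eta)=\tfrac1L\big[(\eta_x-1)^2+(\eta_y+1)^2-\eta_x^2-\eta_y^2\big]=\tfrac{2}{L}\,(\eta_y-\eta_x+1),
\]
so that, with $q(x,y)=1/(L-1)$,
\[
(\mathcal{L}g)(\eta)=\frac{2}{L(L-1)}\sum_{x\ne y}c(\eta_x,\eta_y)\,(\eta_y-\eta_x+1).
\]
The contribution of the $-\eta_x$ term is nonpositive and may be discarded in an upper bound, so it remains to estimate $\sum_{x\ne y}c(\eta_x,\eta_y)\,\eta_y$ and $\sum_{x\ne y}c(\eta_x,\eta_y)$. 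Writing $S_n:=\sum_{x}\eta_x^n$ and using the bilinear bound \eqref{eq:lip}, $c(\eta_x,\eta_y)\le C_1\eta_x(\eta_y+C_2)$, the dominant sum becomes
\[
\sum_{x\ne y}\eta_x(\eta_y+C_2)\eta_y=(S_1S_2-S_3)+C_2(S_1^2-S_2)\le S_1S_2+C_2S_1^2,
\]
and analogously $\sum_{x\ne y}c(\eta_x,\eta_y)\le C_1\big(S_1^2+C_2(L-1)S_1\big)$.

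The crucial input is the uniform mass bound \eqref{alpha}, i.e.\ $S_1\le L\alpha_1$. Dividing by $L(L-1)$ and using $S_2=L\,g(\eta)$ together with $L/(L-1)\le2$ for $L\ge2$, the product $S_1S_2$ produces a term bounded by $4C_1\alpha_1\,g(\eta)$, while all remaining terms ($S_1^2$, $(L-1)S_1$, and the discarded $-S_3,-S_2$) are either nonpositive or bounded by a constant multiple of $\alpha_1$ or $\alpha_1^2$, uniformly in $L$. Collecting everything yields a pointwise estimate $(\mathcal{L}g)(\eta)\le a\,g(\eta)+b$ with $a=4C_1\alpha_1$ and $b$ depending only on $C_1,C_2,\alpha_1$. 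Taking expectations and invoking \eqref{evo} gives $\tfrac{d}{dt}m_2^L(t)\le a\,m_2^L(t)+b$; since $m_2^L(0)\le\alpha_2$ by \eqref{initialcon0c}, comparison with $v(t)=(\alpha_2+Ct)e^{Ct}$---which satisfies $v(0)=\alpha_2$ and $v'(t)=Ce^{Ct}+Cv(t)\ge a\,v(t)+b$ whenever $C\ge\max(a,b)$---yields the claimed bound. (The case $L=1$ is trivial, as then $\mathcal{L}g\equiv0$.)

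The main obstacle, and the only place where the problem's structure really enters, is to obtain a bound on $(\mathcal{L}g)$ that is simultaneously linear in $g=m_2$ and uniform in $L$. A naive treatment of the cubic term $\eta_x\eta_y^2$ would produce the third moment $m_3$, for which no a priori control is available and which would not close the Gronwall loop. It is exactly the sublinear growth \eqref{eq:lip} combined with mass conservation $S_1\le L\alpha_1$ that absorbs one power of the occupation number, turning $S_1S_2/(L(L-1))$ into a bounded multiple of $m_2$. I expect verifying this cancellation and checking that all auxiliary terms are genuinely $L$-uniform to be the delicate part; the concluding Gronwall step is routine.
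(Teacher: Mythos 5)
Your proposal is correct and follows essentially the same route as the paper: both apply the generator to the empirical second moment, use the bilinear bound \eqref{eq:lip} together with the almost-sure mass bound \eqref{alpha} to absorb the would-be third-moment term into a constant multiple of $m_2^L$, and close with Gronwall (your explicit increment computation $g(\eta^{x\to y})-g(\eta)=\tfrac{2}{L}(\eta_y-\eta_x+1)$ is just the summed form of the paper's per-site calculation of $\Lcal\eta_x^2$). Your preliminary remark on the finite state space justifying \eqref{evo}, and the explicit comparison with $v(t)=(\alpha_2+Ct)e^{Ct}$, are minor presentational additions that only make the argument more self-contained.
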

\begin{proof}
With the generator \eqref{eq:GenMis} we get for each $x\in\Lambda$, choosing $g(\eta )=\eta_x^2$,
\[
\Lcal \eta_x^2 =\frac{1}{L-1}\sum_{y\neq x}\Big( c(\eta_y ,\eta_x )(1+2\eta_x )+c(\eta_x ,\eta_y )(1-2\eta_x )\Big)\ .
\]
Using sublinearity \eqref{eq:lip} of the rates and the evolution equation \eqref{evo} we get after summation over $x$
\[
\frac{d}{dt} m_2^L (t)=\frac{1}{L}\sum_{x\in\Lambda} \E^L [\Lcal \eta_x^2 (t)]\leq \E^L\bigg[\sum_{k,l\geq 0}C_1 k(l+C_2 )(2+k+l)F_k^L (\eta (t))F_l^L (\eta (t))\bigg]\ .
\]
Using uniform bounds \eqref{alpha} for first moments and taking expectations of second moments we get
\[
\frac{d}{dt} m_2^L (t)\leq 2C_1 \alpha_1 (\alpha_1 +C_2 ) \big( 1+m_2^L (t)\big)\quad\mbox{for all }t\geq 0\mbox{ and }L\geq 1\ .
\]
With Gronwall's Lemma this implies
\[
m_2^L (t)\leq \big( m_2^L (0)+Ct\big) e^{Ct}
\]
with $C=2C_1 \alpha_1 (\alpha_1 +C_2 )$, and \eqref{eq:lem} follows with the bound on $m_2^L (0)$ from \eqref{initialcon0c}.
\end{proof}

\subsection{Existence of limit processes\label{sec:existence}}
\begin{prop}
\label{propexistence}
Consider the process with generator \eqref{eq:GenMis} and conditions as in Theorem \ref{thmfactorization}. For each $h$, denote by $\mathbb{Q}_h^L$ the measure of the process $t \mapsto H(\eta (t)):=\big\langle F^L (\eta (t)),h\big\rangle$ on path space $D_{[0,\infty)}(\R )$, which is the image measure of $\mathbb{P}^L$ under the mapping $\eta\mapsto \big\langle F^L (\eta),h\big\rangle$. Then $\mathbb{Q}_h^L$ is tight as $L \to \infty$.
\end{prop}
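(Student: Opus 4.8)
The plan is to prove tightness of $\mathbb{Q}_h^L$ via the standard Aldous--Rebolledo criterion for processes in $D_{[0,\infty)}(\R)$. This requires two ingredients: first, that for each fixed $t$ the real-valued random variables $H(\eta(t))=\langle F^L(\eta(t)),h\rangle$ are tight (i.e.\ compact containment), and second, a control on the modulus of continuity expressed through the increments over small time intervals, uniformly in $L$. The first ingredient is immediate since $|H(\eta(t))|\leq \|h\|_\infty \sum_{k\geq 0}F_k^L(\eta(t))=\|h\|_\infty$ by \eqref{fk}, so the process is uniformly bounded and compact containment holds trivially. The real work is the oscillation control, and the natural way to obtain it is through the Dynkin martingale decomposition.

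The key computation is to write, for the bounded test function $G(\eta):=\langle F^L(\eta),h\rangle$,
\begin{equation}
H(\eta(t))=H(\eta(0))+\int_0^t (\Lcal G)(\eta(s))\,ds+M_t^L,
\end{equation}
where $M_t^L$ is a martingale with predictable quadratic variation $\langle M^L\rangle_t=\int_0^t\big((\Lcal G^2)(\eta(s))-2G(\eta(s))(\Lcal G)(\eta(s))\big)\,ds$. By the Aldous--Rebolledo criterion it suffices to control, for stopping times $\tau\leq T$ and small $\theta>0$, the quantities $\mathbb{E}^L\big[\big|\int_\tau^{\tau+\theta}(\Lcal G)(\eta(s))\,ds\big|\big]$ and $\mathbb{E}^L\big[|\langle M^L\rangle_{\tau+\theta}-\langle M^L\rangle_\tau|\big]$, showing both vanish as $\theta\to0$ uniformly in $L$. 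The first step is therefore to compute $(\Lcal G)(\eta)$ explicitly. Acting with the generator \eqref{eq:GenMis} on $G$, each jump $x\to y$ changes $F_k^L$ by $\frac{1}{L}$ terms, and the sum over $x,y$ combined with $q(x,y)=1/(L-1)$ produces an expression of the schematic form $\sum_{k,l}(h_{\cdot}\text{-differences})\,c(k,l)F_k^L(\eta)F_l^L(\eta)$ up to a vanishing diagonal correction of order $1/L$. The crucial point is that $|(\Lcal G)(\eta)|$ is bounded by $C\|h\|_\infty\sum_{k,l}c(k,l)F_k^L(\eta)F_l^L(\eta)$, and using sublinearity \eqref{eq:lip} this is dominated by $C'\|h\|_\infty(1+m_2^L\text{-type terms})$ after summing, i.e.\ controlled by first and second moments of the empirical measure.

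The main obstacle is precisely that $\Lcal G$ is \emph{not} bounded uniformly in $\eta$: the bound involves second-moment quantities $\frac1L\sum_x\eta_x^2$, which can be arbitrarily large on $\Omega$. The resolution is to take expectations and invoke Lemma~\ref{lem}, which gives $m_2^L(t)\leq(\alpha_2+Ct)e^{Ct}$ uniformly in $L$. Thus $\mathbb{E}^L[|(\Lcal G)(\eta(s))|]$ is bounded by a deterministic function of $s$ that is integrable on $[0,T]$, so $\mathbb{E}^L\big[\big|\int_\tau^{\tau+\theta}(\Lcal G)(\eta(s))\,ds\big|\big]\leq \sup_{s\leq T+1}\mathbb{E}^L[|(\Lcal G)(\eta(s))|]\cdot\theta$, which vanishes as $\theta\to0$ uniformly in $L$. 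For the quadratic variation term one checks similarly that the integrand $(\Lcal G^2)-2G(\Lcal G)$ carries an extra factor of order $1/L$ (since each jump perturbs $G$ by $O(1/L)$, the quadratic-variation density scales like $\frac1L\times$ a moment expression), so after taking expectations and again using Lemma~\ref{lem} it is bounded by $\frac{C}{L}\theta$, which also vanishes. Combining the compact-containment bound $|H|\leq\|h\|_\infty$ with these two oscillation estimates, the Aldous--Rebolledo criterion yields tightness of $\mathbb{Q}_h^L$, completing the proof.
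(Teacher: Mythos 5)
Your strategy is essentially the one the paper follows: an Aldous-type criterion, compact containment for free from $|H|\leq \|h\|_\infty$, the Dynkin/It\^{o} decomposition into a drift integral and a martingale, an explicit computation of $\Lcal H$ (your $\Lcal G$) as a bilinear functional of the empirical measure plus an $O(1/L)$ diagonal correction, and the extra factor $1/L$ in the carr\'e du champ killing the martingale part. There is, however, one step that fails as written. Because you chose the Aldous--Rebolledo formulation with stopping times, you need $\E^L\big[\int_\tau^{\tau+\theta}|(\Lcal H)(\eta(s))|\,ds\big]$ small uniformly in $L$, and you obtain it from the claimed inequality $\E^L\big[\big|\int_\tau^{\tau+\theta}(\Lcal H)(\eta(s))\,ds\big|\big]\leq \theta\,\sup_{s\leq T+1}\E^L\big[|(\Lcal H)(\eta(s))|\big]$. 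This does not follow: writing the left-hand side as $\int_0^{T+\theta}\E^L\big[\mathbbm{1}_{\{\tau\leq s<\tau+\theta\}}|(\Lcal H)(\eta(s))|\big]\,ds$, the indicator of the random interval is correlated with the integrand, and discarding it produces a bound proportional to $T$, not $\theta$. Extracting the factor $\theta$ requires an almost-sure bound on $|(\Lcal H)(\eta(s))|$ --- which, as you yourself stress, is precisely what is unavailable, since your bound involves empirical second moments that are controlled only in expectation via Lemma \ref{lem}.

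The repair is the finer split the paper makes in \eqref{compu}--\eqref{eq:limint}, which your coarser diagnosis (``controlled by first and second moments'') glosses over. By sublinearity \eqref{eq:lip}, the off-diagonal main part of $\Lcal H$ is bounded by $C_1\|h\|_\infty\big(\sum_k kF_k^L\big)\big(\sum_l lF_l^L+C_2\big)$ up to constants, and pathwise conservation of mass \eqref{alpha} makes this bounded \emph{almost surely}, uniformly in $s$, $L$ and the initial condition $\zeta\in\Omega_\alpha$: only first moments enter, and these are conserved. Second moments appear solely through the diagonal terms $c(k,k)\leq C_1 k(k+C_2)$ in the error $\Delta_L$, which carries the prefactor $1/(L-1)$; for that piece you may simply drop the indicator, losing the factor $\theta$ but retaining $C_T/L$, which vanishes in the $\limsup_{L\to\infty}$ required by the criterion for every fixed $\theta$. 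With this split your stopping-time argument goes through. Alternatively, the computation in the proof of Lemma \ref{lem} shows that $e^{-Cs}\big(1+\frac{1}{L}\sum_{x}\eta_x(s)^2\big)$ is a supermartingale for paths started in $\Omega_\alpha$, so optional stopping controls the empirical second moment at $\tau$; or one can avoid stopping times altogether, as the paper does, by using the version of Aldous' criterion in \cite{billingsley2013convergence} with deterministic times $t<\delta$ and a supremum over initial conditions $\zeta\in\Omega_\alpha$.
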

\begin{proof} 
Using a version of Aldous' criterion to establish tightness for $\mathbb{Q}_h^L$ (cf. Theorem 16.10 in \cite{billingsley2013convergence}), it suffices to show that for all $t\geq 0$
\begin{equation}
\label{eq:tight1}
\lim_{a\to \infty}\limsup_{L\to\infty} \sup_{{\zeta}\in\Omega_\alpha} \mathbb{P}_\zeta^L \big[ |H(\eta (t))|\geq a\big] =0,
\end{equation}
and that for any $\epsilon>0$
\begin{equation}
\label{eq:tightness}
\lim_{\delta\to 0^+}\limsup_{L\to \infty}\sup_{t<\delta}\sup_{{\zeta}\in\Omega_\alpha} \mathbb{P}_{{\zeta}}^L\big[ |H(\eta (t))-H(\zeta )|>\epsilon\big] =0.
\end{equation}
Here ${\zeta}\in\Omega_\alpha$ denotes a fixed initial condition of the full process satisfying \eqref{initialcon0c}, and $\mathbb{P}_{{\zeta}}^L$ the corresponding path measure.

%, while in \eqref{eq:tight1} the initial condition is given by i.i.d random variables with asymptotic density $\rho$ as given in \eqref{initialcon1}.
Since $\big|\big\langle F^L (\eta),h\big\rangle\big|\leq \| h\|_\infty$ is uniformly bounded in $L$ and $\eta\in\Omega$, \eqref{eq:tight1} follows easily from Markov's inequality,
\[
\mathbb{P}_\zeta^L \big[ |H(\eta (t))|\geq a\big]\leq \frac{\mathbb{E}_\zeta^L \big[ |H(\eta (t))|\big]}{a}\leq \frac{\| h\|_\infty}{a}\quad\mbox{for all }L\geq 1\mbox{ and }\zeta\in\Omega_\alpha\ .
\]

%Applying the generator (\ref{eq:GenMis}) with $q(x,y)=\frac{1}{L-1}$ to the test function $F_k$, we get
%\begin{align}
%\label{eq:GenMisF} 
%(\mathcal{L}F_k)({\eta}) 
%&= \sum_{x,y\in\Lambda} \frac{1}{L{-}1}c(\eta_x,\eta_y)[ F_k({\eta}^{x {\rightarrow} y}) - F_k({\eta})] \nonumber\\
%&=-\frac{1}{L{-}1}\sum_{x,y\in\Lambda} c(k,\eta_y)\frac{\delta_{k,{\eta_x}}}{L}+\frac{1}{L{-}1}\sum_{x\in\Lambda}c(k,\eta_x)\frac{\delta_{k,\eta_x}}{L} \nonumber \\
%&\quad-\frac{1}{L{-}1}\sum_{x,y\in\Lambda} c(\eta_{x},k)\frac{\delta_{k,{\eta_y}}}{L}+\frac{1}{L{-}1}\sum_{x\in\Lambda}c(\eta_x,k)\frac{\delta_{k,\eta_x}}{L} \nonumber\\
%&\quad+\frac{1}{L{-}1}\sum_{x,y\in\Lambda} c(\eta_x,k{-}1)\frac{\delta_{k{-}1,{\eta_y}}}{L}-\frac{1}{L{-}1}\sum_{x\in\Lambda}c(\eta_x,k{-}1)\frac{\delta_{k{-}1,\eta_x}}{L} \nonumber \nonumber \\
%&\quad+\frac{1}{L{-}1}\sum_{x,y\in\Lambda} c(k{+}1,\eta_y)\frac{\delta_{k{+}1,{\eta_x}}}{L}-\frac{1}{L{-}1}\sum_{x\in\Lambda}c(k+1,\eta_x)\frac{\delta_{k+1,\eta_x}}{L} \nonumber\\
%&=-\sum_{l\geq 0} c(k,l)F_k({\eta})F_l({\eta})-\sum_{l\geq 1}c(l,k)F_k({\eta})F_l({\eta}) \nonumber\\
%&\quad+\sum_{l\geq 1}c(l,k{-}1)F_{k{-}1}({\eta})F_l({\eta})+\sum_{l\geq 0}c(k{+}1,l)F_{k+1}({\eta})F_l({\eta}) +O(1/L)\ .
%\end{align}

Now fix $\delta> 0$ and consider $t<\delta$. By It\^{o}'s formula, we have
\begin{equation}
\label{eq:itoeta}
H({\eta}(t))-H(\zeta )=\int_0^t \Lcal H({\eta} (s))\, ds +M_h (t)\ ,
\end{equation}
where $(M_h (t) : t>0)$ is a martingale with quadratic variation given by integrating the 'carr\'e du champ' operator 
\begin{equation}
\label{eq:mart}
[M_h ](t)=\int_0^t \big[\mathcal{L}H^2-2H\mathcal{L}H\big] ({\eta}(s))ds\ .
\end{equation}
Using again Markov's inequality in \eqref{eq:tightness} we have to bound
\begin{equation}\label{tobound}
\E^L_\zeta \Big[\big| H({\eta}(t))-H(\zeta )\big|\Big]\leq \int_0^t \E^L_\zeta \big[|\Lcal H({\eta} (s))|\big]\, ds +\E^L_\zeta \big[ [M_h ](t)\big]^{1/2}\ ,
\end{equation}
where we have used H\"older's inequality for the martingale term and $\E^L_\zeta \big[ M_h^2 (t)\big] =\E^L_\zeta \big[ [M_h ](t)\big]$.

To compute $\Lcal H(\eta )$, we first recall that 
\begin{equation}
\mathcal{L}F_k({\eta})
=\frac{1}{L}\sum_{x\in \Lambda} \mathcal{L}\delta_{\eta_x,k} \,
\end{equation}
and applying the generator (\ref{eq:GenMis}) with $q(x,y)=\frac{1}{L-1}$, we get
\begin{align*}
\mathcal{L}\delta_{\eta_x,k}
&=\delta_{\eta_x,k+1}\sum_{y\neq x}\frac{1}{L-1}c(k+1,\eta_y)+\delta_{\eta_x,k-1}\sum_{y\neq x}\frac{1}{L-1}c(\eta_y,k-1)\\
&\quad -\delta_{\eta_x,k} \sum_{y\neq x} \frac{1}{L-1}\big(c(k,\eta_y)+c(\eta_y,k)\big) .
\end{align*}
Therefore, using the convention $F^L_{-1}\equiv 0$ in the following,
\begin{align}
\mathcal{L}H({\eta})
&=\sum_{k\geq 0} h_k \frac{1}{L}\sum_{x\in\Lambda} (\mathcal{L}\delta_{\eta_x,k}) \nonumber\\
&=\sum_{k\geq 0} h_k \bigg[ F^L_{k{-}1}({\eta})\sum_{l\geq 1}c(l,k{-}1)F^L_l({\eta}) +F^L_{k{+}1}({\eta})\sum_{l\geq 0}c(k{+}1,l)F^L_l({\eta})\nonumber\\
&\qquad\qquad -F^L_k({\eta}) \sum_{l\geq 0} \big(c(k,l)+c(l,k)\big)F^L_l({\eta}) \bigg] (1+1/L)+\Delta_L (\eta )
\label{compu}
\end{align}
which will give the integral term in It\^{o}'s formula \eqref{eq:itoeta}. 
The factor $(1+1/L)$ results from replacing $1/(L-1)$ by $1/L$, and the additive error originates from diagonal terms in summations over $y$ and is of the form
\[
\Delta_L (\eta )=\frac{1}{L-1}\sum_{k\geq 0} h_k \Big( F^L_{k+1} (\eta ) c(k{+}1,k{+}1)+F^L_{k-1} (\eta )c(k{-}1,k{-}1)-2F^L_{k} (\eta )c(k,k)\Big)\ .
\]
Using \eqref{eq:lip}, we have $c(k,k)\leq C_1 k(k+C_2 )$ and therefore
\begin{equation}\label{ebound}
\E^L_\zeta \big[|\Delta_L (\eta_s )|\big] \leq \frac{4}{L-1} \| h\|_\infty C_1 (m_2^L (s) +C_2 \alpha_1 )\ .
\end{equation}
The main contributions in \eqref{compu} can be bounded analogously by
\begin{equation}
\label{eq:paff}
%\Lcal H({\eta}) \leq 
4 \| h\|_\infty C_1 \alpha_1 \left(\alpha_2 +C_2 \right) (1+1/L)\ ,
\end{equation}
which holds uniformly in ${\eta}$. 
Collecting both bounds, we get with Lemma \ref{lem} and an appropriately chosen $C>0$ for the first term in \eqref{tobound}
\begin{equation}
\label{eq:limint}
\int_0^t \E^L_\zeta \big[|\Lcal H({\eta} (s))|\big]\, ds\leq  \delta\| h\|_\infty \Big( 4C_1 \alpha_1 \left(\alpha_1 +C_2\right) +\frac{C}{L}(1+\delta )e^{C\delta}\Big)\to 0 \,
\end{equation}
as $\delta\to 0$, which holds 
%$\mathbb{P}^L_{{\zeta}}$-a.s. 
uniformly in ${\zeta}\in \Omega_\alpha$ and $L\geq 1$.

% in the last line are of the form $\frac1L\sum_{k,l\geq 0}c(l,k)h_k F_k({\eta})F_l({\eta})$, resulting from corrections of diagonal terms in the sum over $x,y\in \Lambda$ and also from replacing $1/(L-1)$ by $1/L$. The sums are uniformly bounded due to \eqref{alpha}, which implies with (\ref{eq:lip})
%\begin{equation}\label{eq:massb}
%\sum_{k,l\geq 0}c(l,k)h_k F_k({\eta})F_l({\eta})\leq \| h\|_\infty C_1 \alpha (C_2 +k F_k({\eta})) \leq \| h\|_\infty C_1\alpha \left(C_2 +\alpha \right)\ .
%%\to C_1 \rho (C_2 +\rho )\ ,
%\end{equation}
%%for all $k\geq 0$ as $L\to\infty$.
% Therefore, we get for the first term in (\ref{eq:itoeta})

It remains to estimate the second term in \eqref{tobound} by the quadratic variation \eqref{eq:mart}. We need to compute terms of the form
\begin{align}
(\mathcal{L}F^L_kF^L_l)({\eta}) 
&=\frac{1}{L^2}\sum_{x,y\in \Lambda} \mathcal{L}\left(\delta_{\eta_x,k} \delta_{\eta_y,l}\right)\nonumber\\
&=\frac{1}{L^2}\sum_{x\in\Lambda\atop y\neq x}
\left(\delta_{\eta_y,l}\mathcal{L}\delta_{\eta_x,k}+\delta_{\eta_x,k}\mathcal{L}\delta_{\eta_y,l}\right)\nonumber\\
&\quad + \frac{1}{L^2}\sum_{x\in \Lambda} \Big[ \delta_{k,l}\mathcal{L}\delta_{\eta_x,k} +(1-\delta_{k,l})(\delta_{\eta_x,k}\mathcal{L}\delta_{\eta_x,l}+\delta_{\eta_x,l}\mathcal{L}\delta_{\eta_x,k}) \Big] .
\end{align}
This leads to
\begin{align}
(\mathcal{L}H^2)({\eta})
&=\sum_{k,l} h_kh_l \bigg[ 2\frac{1}{L^2}\sum_{x\in\Lambda\atop y\neq x}\delta_{\eta_y,l}\mathcal{L}\delta_{\eta_x,k}+\delta_{k,l}\frac{1}{L^2}\sum_{x\in\Lambda} \mathcal{L}\delta_{\eta_x,k}\nonumber\\
&\qquad\qquad\quad +2(1-\delta_{k.l})\frac{1}{L^2}\sum_{x\in\Lambda} \delta_{\eta_x,l}\mathcal{L}\delta_{\eta_x,k}\bigg] ,
\end{align}
and 
\begin{align}
2H(\mathcal{L}H)({\eta})
&=2\sum_{k,l\geq 0}h_kh_l\bigg[\frac{1}{L^2}\sum_{x\in\Lambda\atop y\neq x} \delta_{\eta_y,l}\mathcal{L}\delta_{\eta_x,k}+\frac{1}{L^2}\sum_{x\in\Lambda} \delta_{\eta_x,l}\mathcal{L}\delta_{\eta_x,k}\bigg] .
\end{align}
As usual, the leading order contributions to the carr\'e du champ cancel and we are left with diagonal terms
\begin{align}
\label{eq:mart2}
(\mathcal{L}H^2)-2H(\mathcal{L}H)
&=\sum_{k,l\geq 0} h_k h_l\frac{1}{L^2}\sum_{x\in\Lambda}\big[\delta_{k,l}\big( \mathcal{L}\delta_{\eta_x,k}-2\delta_{\eta_x,l}\mathcal{L}\delta_{\eta_x,k}\big)\big] \nonumber \\
&=\sum_{k\geq 0} h^2_k \frac{1}{L^2}\sum_{x \in\Lambda}(1-2\delta_{\eta_x,k})\mathcal{L}\delta_{\eta_x,k} \nonumber \\
&=\frac{1}{L} \sum_{k\geq 0} h^2_k \bigg[ F^L_{k{-}1}\sum_{l\geq 0}c(l,k{-}1)F^L_l+F^L_{k{+}1}\sum_{l\geq 0}c(k{+}1,l)F^L_l \nonumber \\
&\qquad\qquad\quad\ +F^L_k\sum_{l\geq 0} \big(c(k,l)+c(l,k)\big)F^L_l \bigg] (1+1/L)+\Delta'_L \ ,
\end{align}
where we have suppressed arguments $(\eta )$ to simplify notation. Error terms have the same origin as in \eqref{compu} and can be estimated completely analogously. 
%Error terms are again of the form MIM!! CHECK ANALOGOUS TO \eqref{eq:massb}\\
With bounds corresponding to \eqref{ebound} for $\Delta'_L$ and \eqref{eq:paff} this implies for the quadratic variation part in \eqref{tobound} 
\begin{equation}
\label{eq:limquadratic}
\E^L_\zeta \big[[M_h ](t)\big]\leq t\| h\|_\infty^2 \frac{1}{L} \Big( 4C_1 \alpha_1 \left(\alpha_1 +C_2\right) +\frac{C}{L}(1+t )e^{Ct}\Big)
%\Big(  C_1 \alpha \big( C_2 +\alpha\big) +O(1/L)\Big)
 \to 0\ ,
\end{equation}
with $t<\delta$ as $\delta\to 0$. This holds again uniformly in ${\zeta}\in \Omega_\alpha$ and $L\geq 1$, and finishes the proof.
%$\mathbb{P}^L_{{\zeta}}$-a.s. uniformly in ${\zeta}\in \Omega$ and in $L$. 
%Almost sure convergence in (\ref{eq:limint}) and (\ref{eq:limquadratic}) uniformly in ${\zeta}\in \Omega$ and in $L$ implies (\ref{eq:tightness}), finishing the proof.
\end{proof}

By Prohorov's theorem, the tightness result in Proposition \ref{propexistence} implies the existence of limit points of the sequence $\big(\langle F^L (t),h\rangle :t\geq 0\big)$ in the usual topology of weak convergence on path space. 
By linearity of $h\mapsto \langle F^L(\eta(.)),h \rangle $, convergence along a subsequence for a given test function $h$ implies convergence for all bounded $h$. This establishes existence of limit processes $(f(t): t\geq 0)$ which may not be unique and still be random at this point.

\subsection{Characterization of limit points\label{sec:limit}}

Due to the extra factor $1/L$ in the estimate of the quadratic variation in (\ref{eq:limquadratic}), we see that in fact we have
\[
\E_\zeta^L \big[ [M_h ](t)\big] =\E_\zeta^L \big[ M_h^2 (t)\big]\to 0\quad \mbox{as }L\to\infty\quad\mbox{for all }t\geq 0.
\]
This holds uniformly in the initial condition ${\zeta}\in\Omega_\alpha$, and implies that $M_h (t)\to 0$ in (\ref{eq:itoeta}) in $L^2$-sense for all $t\geq 0$. Therefore, inserting the limit points $(f(t): t\geq 0)$ in (\ref{compu}), they solve the following deterministic equation
\begin{align}\label{converge}
\langle f(t),h\rangle {-}\langle f(0),h\rangle  
&=\int_0^t \sum_{k,l\geq 0}h_k c(k{+}1,l)f_l(s)f_{k+1}(s){+}\sum_{k,l\geq 0} h_k c(l,k{-}1) f_l(s)f_{k-1}(s) \nonumber \\
&\quad-\bigg(\sum_{k,l\geq 0}h_k c(k,l)f_l(s)+\sum_{k,l\geq 0} h_k c(l,k)f_l(s)\bigg)f_{k}(s)\, ds
\end{align}
for all bounded $h:\N_0\to \R$.
%for all bounded $h:\N_0 \to\R$. 
This is equivalent to \eqref{eq:mis_diff_f_k}, which has a unique global solution as shown in the next subsection, thereby identifying the limit $(f(t): t\geq 0)$ and establishing the law of large numbers.

\subsection{Uniqueness \label{sec:uniqueness}}

We consider solutions of \eqref{eq:mis_diff_f_k}, $f(t)=(f_k(t): k\in \mathbb{N}_0)$ which are limit points of the sequence $f^L (t)$. It has been shown recently in \cite{esenturk2017} independently, that under our conditions \eqref{eq:mis_diff_f_k} has a unique solution.
% and $t \mapsto f_k(t)$ is continuous differentiable for all $k\geq 0$ and 
This work is based on the classical proof in \cite{ball1986becker} and for completeness we will present a condensed version of the proof of uniqueness, whereas existence of the solution is then implied by our limit result. 
% but has been shown independently in \cite{esenturk2017} by a standard cut-off technique. 
In analogy to \eqref{lmoment}, we denote for all $n\in\N_0$ the $n^{th}$ moment of a solution $f(t)$ by
\begin{equation}
\label{moments}
m_n (t)=\sum_{k\geq 0} k^n f_k(t).
\end{equation}

\begin{prop}
Let $t\mapsto f(t)$ be a solution to \eqref{eq:mis_diff_f_k} with moments
\[
m_0 (0)=1\ ,\quad m_1 (0)=\rho <\infty\quad\mbox{and}\quad m_2 (0)<\infty\ .
\]
Then, $t \mapsto f(t)$ is unique. 
\end{prop}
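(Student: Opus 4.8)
The plan is to prove uniqueness via a Gronwall-type argument on the difference of two solutions, following the classical Ball–Carr–Penrose strategy for Becker–Döring. Suppose $f(t)$ and $\tilde f(t)$ are two solutions sharing the same initial data $f(0)$ with finite first and second moments. I would control the weighted $\ell^1$-distance
\[
\Delta(t):=\sum_{k\geq 0}(1+k)\,\bigl|f_k(t)-\tilde f_k(t)\bigr|
\]
and aim to show $\frac{d}{dt}\Delta(t)\leq K(t)\,\Delta(t)$ for a locally bounded $K(t)$, so that $\Delta(0)=0$ forces $\Delta(t)\equiv 0$. The weight $(1+k)$ is chosen because the rates grow sublinearly like $k(l+C_2)$ by \eqref{eq:lip}, and the extra factor of $k$ is exactly what is needed to close the estimate against the first-moment bound. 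The reason for proving only uniqueness is that existence is already supplied by the limit construction in the preceding subsections, so this proposition completes the identification of the limit.

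First I would record the a priori moment bounds that any such solution must satisfy: since $\frac{d}{dt}\sum_k f_k(t)=0$ (conservation of mass of the birth–death chain, discussed in Section~\ref{sec:cons}) we have $m_0(t)=1$, and the conservation of the first moment gives $m_1(t)=\rho$ for all $t$. A Gronwall argument at the level of the solution itself, mirroring Lemma~\ref{lem}, yields a bound $m_2(t)\leq (m_2(0)+Ct)e^{Ct}$ that is locally bounded in $t$; this finiteness of the second moment, uniform on compact time intervals, is what guarantees all the sums appearing below converge and can be manipulated termwise.

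Next I would substitute the difference $d_k:=f_k-\tilde f_k$ into the weak form \eqref{converge} with the test function $h_k=(1+k)\,\sgn(d_k(t))$, which is bounded for each fixed $t$, and split each bilinear term $c(\cdot,\cdot)f_l f_{k'}$ into $c(\cdot,\cdot)(f_l f_{k'}-\tilde f_l\tilde f_{k'})$. The standard telescoping $f_l f_{k'}-\tilde f_l\tilde f_{k'}=d_l f_{k'}+\tilde f_l d_{k'}$ converts each quadratic discrepancy into a sum of terms each linear in some $d_j$, multiplied by a rate and by a bounded factor of the other (unperturbed) solution. Using the sublinear bound $c(k,l)\leq C_1 k(l+C_2)$ together with the weight $(1+k)$ and the moment bounds $m_0,m_1,m_2$ from the previous step, every resulting sum can be majorized by a constant times $\sum_j(1+j)|d_j|=\Delta(t)$. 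A minor care point is justifying the termwise differentiation and the index shifts $k\mapsto k\pm1$ in the infinite sums; the second-moment bound makes the tails summable and legitimizes interchanging sum and integral.

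\textbf{The hard part} will be bookkeeping the cancellations so that the \emph{quadratic} growth coming from the product of two large occupation numbers does not survive: naively, a rate $c(k,l)\sim k l$ paired against the weight $(1+k)$ would produce a term scaling like $k^2 l$, which cannot be bounded by the first moment alone. The resolution is the loss/gain structure of the master equation, where the gain into level $k$ and the loss out of level $k$ combine after the weight is applied so that the genuinely dangerous terms appear with opposite signs and telescope; what remains is controlled by $C_2$ and the finite second moment rather than by an uncontrolled higher moment. Getting this cancellation to hold at the level of the signed, weighted sum — rather than merely in the formal pointwise equation — is the delicate estimate on which the whole argument turns, and it is precisely the step where the weight $(1+k)$, as opposed to a heavier weight, must be used to keep $K(t)$ finite.
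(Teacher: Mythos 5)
Your proposal takes essentially the same route as the paper's proof: the same Ball--Carr--Penrose weighted distance $\theta(t)=\sum_{k\ge 0}(k+1)|f_k-\tilde f_k|$, the same a priori input (conservation of $m_0$ and $m_1$ plus the Gronwall second-moment bound of Proposition~\ref{gwall}), the same bilinear splitting $f_lf_{k'}-\tilde f_l\tilde f_{k'}=d_lf_{k'}+\tilde f_l d_{k'}$, and the same decisive cancellation, namely that after extracting $\sgn d_k$ the weight differences telescope to $\pm 1$ on the $\tilde f_l d_k$ terms, while the signless $f_k d_l$ terms are estimated crudely (an $O(k)$ factor) and absorbed via $m_2(t)$. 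One small slip: your test function $h_k=(1+k)\sgn d_k$ is \emph{not} bounded (it grows linearly), but this is harmless since, as you yourself note, the finite first and second moments justify the termwise manipulations---the paper sidesteps the issue by differentiating $\theta(t)$ directly, using $\frac{d}{dt}|\psi|=\sgn(\psi)\,\frac{d\psi}{dt}$ almost everywhere.
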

\begin{proof}
As explained in Section \ref{sec:cons}, any solution with the above properties has conserved moments
\[
m_0 (t)=1\quad\mbox{and}\quad m_1 (t)=\rho\quad\mbox{for all }t\geq 0\ ,
\]
and since we assume $m_2 (0)<\infty$, as is shown in (\ref{eq:gwall}) later it has also bounded second moment for all compact time intervals $t\in [0,T]$. 
	
Suppose $f$ and $\hat{f}$ are two solutions of \eqref{eq:mis_diff_f_k} with the above properties and $f(0)=\hat{f}(0)$. We write $\Delta_k (t):=f_k (t)-\hat f_k (t)$ and establish a Gronwall estimate for
\begin{equation}
\label{theta}
\theta (t):=\sum_{k\geq 0} (k+1) |\Delta_k (t)|
\end{equation}
for all times $t\in [0,T]$ in an arbitrary compact time interval. Due to conservation of zeroth and first moment $\theta (t)\leq 2(1+\rho )$ is well defined for all $t\geq 0$, and of course $\theta (0)=0$.

Following \cite{ball1986becker}, note that for any uniformly continuous function $\psi (t)$ also $|\psi (t)|$ is uniformly continuous, and we have
\[
\frac{d}{dt} |\psi (t)| =\sgn \psi (t) \frac{d\psi}{dt} (t)\quad \mbox{for almost all }t\geq 0\ ,
\]
where for $\psi\in\R$ we set $\sgn\psi$ equal to $-1,\ 0$ or $1$ according to $\psi <0$, $\psi =0$ and $\psi >1$, respectively. 
Using the convention $c(k,l)=0$ if $l<0$ or $k\leq 0$, and suppressing the time variables of $f$ to simplify notation, we get from \eqref{eq:mis_diff_f_k}
\begin{align}
\frac{d}{dt} \theta(t) 
=&\sum_{k\geq 0} (k{+}1)\sgn\Delta_k(t)  \nonumber\\
& \quad\bigg (\sum_{l\geq 0} \Big[ c(l,k{-}1)f_l f_{k-1}  +c(k{+}1,l)f_l f_{k+1} -\big( c(l,k)+c(k,l)\big) f_l f_k\Big]\nonumber\\
& \quad\quad+\sum_{l\geq 0} \Big[ c(l,k{-}1)\hat f_l \hat f_{k-1}  +c(k{+}1,l)\hat f_l \hat f_{k+1} -\big( c(l,k)+c(k,l)\big) \hat f_l \hat f_k\Big]\bigg)\nonumber\\
=& \sum_{k,l\geq 0} \Big[ c(l,k) ( s_{k+1} -s_{k} )+c(k,l) (s_{k-1}-s_{k})\Big] \Big( f_k \Delta_l +\hat f_l \Delta_k \Big)\ ,
\end{align}
where we use the shorthand $s_k =(k+1)\sgn \Delta_k(t)$ with the convention $s_{-1}=0$ %and $s_0=|\Delta_0(t)|$ 
in the last line.
For the terms involving $\hat{f}_l \Delta_k$, we can take $\sgn\Delta_k$ out of the first bracket, and use $\sum_{k\geq 0} |\Delta_k| \leq \theta(t)$ %(s_k\Delta_k=(k+1)sgn\Delta_k\Delta_k=(k+1)|\Delta_k|)
and $f_l \geq 0$ to get the upper bound
\[
\sum_{k,l\geq 0} \big[ c(l,k)-c(k,l) \big] \hat{f}_l |\Delta_k |\leq \sum_{k,l\geq 0} 2 C_1 (C_2 +k)(C_2 +l) \hat{f}_l |\Delta_k |\leq 2C_1 C_2 (C_2 +\rho) \theta(t)
\]
with (\ref{eq:lip}), and assuming without loss of generality,\ that $C_1 ,C_2\geq 1$.\\
For the term involving $f_k \Delta_l$, since $\Delta_l$ does not have a fixed sign, we need to estimate the sum in the square brackets to get the upper bound
\[
\sum_{k,l\geq 0} (2k+3)\big( c(l,k)+c(k,l)\big) f_k (t)|\Delta_l (t)|\leq C \big( 1+\rho +m_2 (t)\big) \theta (t),
\]
analogously to before for a large enough constant $C>0$. From (\ref{eq:gwall}) in Proposition \ref{gwall} we know that $\sup_{t\in [0,T]} m_2 (t)\leq (m_2 (0)+\bar C_2 T)\, e^{\bar C_2 T}$ for any fixed $T>0$, so that we can combine both bounds to get
\[
\frac{d}{dt} \theta (t)\leq C\big( 1+\rho +(m_2 (0)+\bar C_2 T)\, e^{\bar C_2 T}\big) \theta (t)=:C_T \theta (t)
\]
for all $t\leq T$ with a suitable constant $C_T$. By Gronwall's lemma, this implies
\[
\theta (t)\leq \theta (0)e^{C_T t}=0\quad\mbox{for all }t\leq T
\]
since $\theta (0)=0$. This completes the proof since $T>0$ was arbitrary.
\end{proof}

\subsection{Proof of Corollary \ref{cor1}\label{sec:cor2}}

Propagation of chaos for processes follows from the standard result in \cite{daipra2017}, provided we lift the law of large numbers for empirical processes in Theorem \ref{thmfactorization} to a law of large numbers on the path space of birth death chains. For any fixed $T>0$, denote the path of the occupation number on site $x\in\Lambda$ by $\eta_x [0,T]=(\eta_x (t):t\in [0,T])$ and by
\[
\hat \Q^L_{[0,T]} [d\omega ]=\frac{1}{L}\sum_{x \in \Lambda} \delta_{\eta_x [0,T]} [d\omega ]\quad\mbox{the empirical measure on path space $D_{[0,T]} (\N_0 )$}\ .
\]
We have to show that these random measures converge to the path measure $\hat\Q_{[0,T]}$ of a non-linear birth death chain with (time-dependent) generator for bounded, continuous $h:\N_0 \to\R$
\begin{equation}
\Lcal_{f(t)}  h(k) :=\sum_{l\geq 0} f_l (t) \Big( c(l,k)\big( h(k+1)-h(k)\big) +c(k,l)\big( h(k-1)-h(k)\big) \Big)\ ,
\label{ssgencon}
\end{equation}
corresponding to the master equation \eqref{eq:mis_diff_f_k}. As a side remark, this generator can also be derived as the limit of the original full generator $\Lcal h(\eta_x (t))$ applied to a test function of one variable only. The marginals of $\hat \Q^L_{[0,T]}$ at given times $t$ are simply $F^L (\eta (t))$, and Theorem \ref{thmfactorization} directly implies convergence of marginals for any $t_1 <\ldots <t_m$, $m>0$
\[
\big( F^L (\eta (t_1 )),\ldots ,F^L (\eta (t_m ))\big)\to \big(f(t_1 ),\ldots ,f(t_m )\big)
\]
in a weak sense, i.e.\ integrated against any test function $h\in C(\N_0 )$. The solution $f (t)$ of \eqref{eq:mis_diff_f_k} determines the time marginals of the limiting path measure $\hat\Q_{[0,T]}$.

It remains to show convergence of finite dimensional distributions of the path measures. A weak version of \eqref{eq:mis_diff_f_k} can be written as
\begin{equation}\label{eq:lime}
\frac{d}{dt}\langle f(t),h\rangle =\langle f(t),\Lcal_{f(t)} h\rangle\ ,
\end{equation}
and as usual the time evolution of the solution is given by the propagator $T_f (t_1 ,t_2):C^b (\N_0)\to C^b (\N_0 )$ such that
\[
\big\langle f(t_2 ),h\big\rangle =\big\langle f(t_1 ) ,T_f (t_1 ,t_2) h\big\rangle\quad\mbox{for any }0\leq t_1 <t_2 \leq T\quad\mbox{and}\quad h\in C^b (\N_0 )\ .
\]
Informally, one often writes $T_f (t_1 ,t_2)=\exp \big(\int_{t_1}^{t_2} \Lcal_{f(s)} ds\big)$, and due to non-linearity of \eqref{eq:lime} the propagator does not form a semigroup and depends on the solution $f$ (see e.g.\ \cite{kolokoltsov} for details). 
As usual, the path measure $\hat\Q_{[0,T]}$ is fully characterized by the propagator via its finite dimensional distributions. Denoting a canonical path by $\omega\in D_{[0,T]} (\N_0 )$, we have for example for a two-point event
\[
\hat{\Q}_{[0,T]} \big[\omega (t_1)=k_1 ,\omega (t_2)=k_2 \big] =\big\langle f(0),T_f (0,t_1 )\delta_{k_1} \big\rangle\big\langle \delta_{k_1} ,T_f (t_1 ,t_2) \delta_{k_2}\big\rangle\ .
\]
The propagator for the random path measures $\hat \Q^L_{[0,T]}$ is given by the semigroup $T(t):C^b (\Omega)\to C^b (\Omega)$ of the original process via
\[
\E\big[\langle F^L (\eta (t_2 )),h\rangle \big| \mathcal{F}_{t_1}\big] =\big\langle T(t_2 -t_1) F^L (\eta (t_1 )),h\big\rangle\ ,
\]
where $(\mathcal{F}_t :t\geq 0)$ is the natural filtration of the process $(\eta (t):t\geq 0)$. Convergence of the full processes $\big( F^L (\eta (t)):t\geq 0\big)$ to $\big( f(t):t\geq 0\big)$ in Theorem \ref{thmfactorization} implies in particular convergence of propagators, and therefore a weak law of large numbers for path space empirical measures $\hat \Q^L_{[0,T]} \to \hat \Q_{[0,T]}$ for any fixed $T>0$, as required.

\section{Properties of solutions \label{sec:properties}}

It has recently been shown independently \cite{esenturk2017} in a more general setting that \eqref{eq:mis_diff_f_k} with sublinear rates \eqref{eq:lip} has a unique, global solution, and that $t\mapsto f_k (t)$ is continuously differentiable for all $k\in\N_0$ and $t\geq 0$. In the following we summarize a few further important properties of the solutions to the mean-field equation including conserved quantities and stationary distributions, which also leads to several interesting open questions beyond the scope of this paper.

\subsection{Conserved quantities and moments \label{sec:cons}}

For any solution of \eqref{eq:mis_diff_f_k} the $f_k(t)$ are in particular limits of the expectations $f_k^L(t) \in [0,1]$ defined in \eqref{eq:f^L}, so we have $f_k(t)\in[0,1]$ for all $k\geq 0, t\geq 0$. Since for all $t\geq 0$ and $L\geq 1$ we have $\sum_{k\geq 0} f_k^L (t)=1$ and with \eqref{initialcon0c} and \eqref{alpha} also $\sum_{k\geq 0} kf_k^L (t)\leq\alpha_1$, we further know a-priori by Fatou's Lemma that
\begin{equation}\label{eq:fatou}
\sum_{k\geq 0} f_k (t)\leq 1\quad\mbox{and}\quad \sum_{k\geq 0} kf_k  (t)\leq\alpha_1 \quad\mbox{for all }t\geq 0\ .
\end{equation}
With Corollary \ref{cor1} the limiting mean-field equation \eqref{eq:mis_diff_f_k} is also the master equation of the non-linear birth death chain $(X_t :t\geq 0)$ on $\mathbb{N}_0$ with generator
\begin{equation}\label{bdgen}
\mathcal{L}_{f(t)}h(k)=\sum_{l\geq 0} c(k,l)f_l(t)\big( h(k{-}1)-h(k)\big) +\sum_{l\geq 0} c(l,k)f_l(t)\big( h(k{+}1)-h(k)\big)\ ,
\end{equation}
where $c(0,l)=0$ for all $l\geq 0$. Under additional symmetry assumptions for the initial condition \eqref{initialcon1}, this is equal to the limit dynamics of the single site process \eqref{ssd}, and the time dependent birth rates $\beta_k (t)$ and death rates $\mu_k (t)$ can be written as
\begin{equation}\label{bdrates}
\beta_k (t)=\sum_{l\geq 0}c(l,k) f_l (t)\quad\mbox{and}\quad\mu_k (t)=\sum_{l\geq 0}c(k,l) f_l (t).
\end{equation}
It is clear from sublinearity of the jump rates (\ref{eq:lip}) that the generator (\ref{bdgen}) is defined for at least all bounded functions $h:\N_0 \to\R$ (in fact also functions with at most linear growth rate), which is sufficient to fully characterize the adjoint operator $\mathcal{L}_{f(t)}^{\dagger}$ on probability measures on $\N_0$. This non-linear operator then describes the right-hand side of the master equation \eqref{eq:mis_diff_f_k} which can be written as
\[
\frac{d}{dt}f(t)=\mathcal{L}_{f(t)}^{\dagger}f(t).
\]
$f(t)$ is indeed a probability distribution on $\mathbb{N}_0$ for all $t\geq 0$ since we have 
\[
\mathcal{L}_{f(t)}1=0\quad\mbox{and therefore }\quad m_0(t)=m_0(0)=1\ ,
\]
i.e.\ conservation of the $0$-th moment \eqref{moments}. Also, as usual for birth death chains $\mathcal{L}_{f(t)}k=\beta_k (t)-\mu_k (t)$, which leads to
\[
\frac{d}{dt}m_1(t)=\sum_{k\geq 0}f_k(t)\mathcal{L}_{f(t)}k=\sum_{k\geq 0}\sum_{l\geq 0} f_k(t) f_l(t) \big( c(l,k)-c(k,l)\big) =0,
\]
since with (\ref{eq:fatou}) the sum converges absolutely. 
This implies that the expectation is conserved for the chain $(X_t :t\geq 0)$, and with \eqref{initialcon0}
\[
m_1(t)=m_1(0)=:\rho>0\quad\mbox{for all }t\geq 0\ ,
\]
which corresponds to the asymptotic density $\rho$ in the original particle system. Note, however, that $(X_t : t\geq 0)$ is not a martingale since $\mathcal{L}_{f(t)}k\neq 0$, and the conservation of $m_1$ results from the non-linearity of the process.

Analogously to Lemma \ref{lem}, conservation of the first moment implies a Gronwall estimate for higher order integer moments for the solution $f(t)$.
\begin{prop}\label{gwall}
Assume that $m_n (0)<\infty$ for some integer $n\geq 2$. Then there exists a constant $\bar C_n$ such that
\begin{equation}\label{eq:gwall}
m_n (t) \leq \big( m_n (0)+\bar C_n t\big) e^{\bar C_n t}\quad\mbox{for all }t\geq 0\ .
\end{equation}
%which is finite on all compact time intervals provided that $m_n (0)<\infty$. 
\end{prop}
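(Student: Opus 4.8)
The plan is to mirror the proof of Lemma \ref{lem}, but now at the level of the limiting master equation \eqref{eq:mis_diff_f_k}, equivalently the non-linear birth death generator \eqref{bdgen} with rates \eqref{bdrates}. The starting point is the weak differential form \eqref{eq:lime}, valid for bounded test functions, which reads $\frac{d}{dt}\langle f(t),h\rangle = \langle f(t),\Lcal_{f(t)}h\rangle$. First I would apply $\Lcal_{f(t)}$ to the monomial $k\mapsto k^n$: by \eqref{bdgen} this yields $\Lcal_{f(t)}k^n = \beta_k(t)\big[(k+1)^n-k^n\big]+\mu_k(t)\big[(k-1)^n-k^n\big]$, where both bracketed differences are polynomials in $k$ of degree $n-1$ (by the binomial theorem) with leading terms $\pm n k^{n-1}$. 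Two structural facts then do the work: the death term is non-positive, since $(k-1)^n\le k^n$ for all $k\ge 0$ (and $\mu_0=0$ by $c(0,l)=0$ anyway), so it may be dropped for an upper bound; and by sublinearity \eqref{eq:lip} together with conservation of the first moment $m_1(t)=\rho$ from Section \ref{sec:cons}, the birth rate obeys $\beta_k(t)=\sum_l c(l,k)f_l(t)\le C_1(k+C_2)\,m_1(t)=C_1\rho\,(k+C_2)$.

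The main obstacle is purely one of rigour: to write $\frac{d}{dt}m_n(t)=\sum_k f_k(t)\,\Lcal_{f(t)}k^n$ and differentiate under the sum one would need $m_n(t)$ to be finite a priori, which is exactly what we are trying to establish. The hard part will therefore be to run the estimate through bounded truncations rather than the unbounded monomial. Concretely I would take the bounded test function $\psi_N(k):=(k\wedge N)^n$, for which \eqref{eq:lime} applies directly, and note that its discrete gradients vanish for $k\ge N$; hence only the birth contributions with $k<N$ survive on the right-hand side, all death contributions being non-positive. This gives $\frac{d}{dt}\langle f(t),\psi_N\rangle \le \sum_{k<N} f_k(t)\,\beta_k(t)\big[(k+1)^n-k^n\big]$, a finite sum of continuous summands, so that the integral form \eqref{converge} may legitimately be differentiated in $t$.

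It then remains to close a Gronwall inequality uniformly in $N$. Using the bound on $\beta_k(t)$ and $(k+1)^n-k^n\le \tilde C_n(1+k^{n-1})$, the right-hand side is controlled by $\sum_{k<N} f_k(t)\,C_1\rho\,(k+C_2)\tilde C_n(1+k^{n-1})$, and since $(k+C_2)(1+k^{n-1})\le \bar c_n(1+k^n)$ with $(k\wedge N)^n=k^n$ for $k<N$, this is at most $\bar C_n\big(1+\langle f(t),\psi_N\rangle\big)$ for a constant $\bar C_n$ independent of $N$ (here the lower moments are absorbed using $m_0(t)=1$ and $k^j\le 1+k^n$ for $0\le j\le n$). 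Thus $\frac{d}{dt}\langle f(t),\psi_N\rangle\le \bar C_n\big(1+\langle f(t),\psi_N\rangle\big)$, and Gronwall's lemma — exactly as at the end of Lemma \ref{lem} — gives $\langle f(t),\psi_N\rangle\le (m_n(0)+\bar C_n t)e^{\bar C_n t}$, using $\langle f(0),\psi_N\rangle\le m_n(0)$. Finally I would let $N\to\infty$: by monotone convergence $\langle f(t),\psi_N\rangle\uparrow m_n(t)$, which simultaneously proves $m_n(t)<\infty$ and yields \eqref{eq:gwall}. The only remaining points needing care are the explicit constants $\tilde C_n,\bar c_n$ from the binomial expansion and the verification that $\Lcal_{f(t)}\psi_N$ is bounded so that \eqref{eq:lime} applies, both of which are routine.
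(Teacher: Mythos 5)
Your proof is correct, and its core is the same as the paper's: apply the generator \eqref{bdgen} to the $n$-th power, observe via the binomial theorem that the discrete gradients are polynomials of degree $n-1$, control the rates through sublinearity \eqref{eq:lip} and the conserved first moment $m_1(t)=\rho$, absorb lower moments using $m_0(t)=1$, and close with Gronwall. Where you differ is in rigour and in one small simplification. The paper works directly with the unbounded monomial $k^n$ and writes $\frac{d}{dt}m_n(t)=\E\big[\Lcal_{f(t)}X_t^n\big]\leq \bar C_n(1+m_n(t))$, implicitly assuming that $m_n(t)$ is finite and that differentiation under the infinite sum is legitimate --- precisely the circularity you identify. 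Your truncation via $\psi_N(k)=(k\wedge N)^n$, for which the weak form \eqref{eq:lime}/\eqref{converge} applies to a genuinely bounded test function, followed by a Gronwall bound uniform in $N$ and monotone convergence as $N\to\infty$, repairs this: it simultaneously proves $m_n(t)<\infty$ and delivers \eqref{eq:gwall}. Your second deviation --- dropping the death term outright since $\psi_N$ is non-decreasing, $\mu_k(t)\geq 0$, and $\mu_0(t)=0$ by $c(0,l)=0$ --- is a clean shortcut; the paper instead keeps both terms and bounds $c(k,l)p^-_{n-1}(k)$ by a degree-$n$ polynomial, which costs nothing but is slightly less economical. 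One point stated loosely in your write-up: rather than verifying continuity of $s\mapsto\langle f(s),\Lcal_{f(s)}\psi_N\rangle$ to differentiate, it is simpler to note that the integral form of \eqref{converge} plus your uniform bound gives $\langle f(t),\psi_N\rangle\leq \langle f(0),\psi_N\rangle+\int_0^t \bar C_n\big(1+\langle f(s),\psi_N\rangle\big)\,ds$, to which the integral version of Gronwall applies directly; with this minor adjustment your argument is complete, and in fact somewhat stronger than the published proof.
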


\begin{proof}
Note that $(k\pm 1)^n -k^n =p^\pm_{n-1} (k)$ is a polynomial of degree $n-1$, which implies with (\ref{bdgen}) and sublinear rates (\ref{eq:lip}) that
\[
\mathcal{L}_{f(t)} k^n =\sum_{l\geq 0} c(k,l)f_l(t)p^-_{n-1} (k)+\sum_{l\geq 0} c(l,k)f_l(t)p^+_{n-1} (k)\leq C\sum_{l\geq 0} l f_l(t) p_n (k)
\]
for some constant $C>0$ and polynomial $p_n (k)$ of degree $n$. Since $\sum_{l\geq 0} l f_l(t)=\rho <\infty$, and $m_{n} (t)\leq m_{n+1} (t)$ for all $n\geq 1$, this implies with $m_0 (t)=1$ that for some constant $\bar C_n >0$
\[
\frac{d}{dt} m_n (t)=\frac{d}{dt} \E \big[ \Lcal_{f(t)} X_t^n\big]\leq \bar C_n \big( 1+m_n (t)\big)\ .
\]
The result then follows by Gronwall's Lemma.	
\end{proof}
%By Gronwall's Lemma this implies for all $t\geq 0$
%\begin{equation}\label{eq:gwall}
%m_n (t) \leq \big( m_n (0)+C_n t\big) e^{C_n t}\ ,
%\end{equation}
%which is finite on all compact time intervals provided that $m_n (0)<\infty$. 

\subsection{Stationary distributions}

Note that with \eqref{bdrates} it follows immediately that $\bar f_k :=\delta_{0,k}$ is a stationary distribution for the limiting single site chain $(X_t :t\geq 0)$, but in general $0$ is not an absorbing state as long as $f_k (0)>0$ for some $k>0$. 
By assumption \eqref{cassum} on the rates $c$ the chain is further irreducible unless $f(0)$ is degenerate, but we will discuss below how the additional conserved first moment leads to non-uniqueness for the stationary distribution.

Under certain conditions on the jump rates we can identify a class of stationary distributions via the original particle system. A misanthrope-type process with generator (\ref{eq:GenMis}) on the complete graph has a family of stationary product measures $\nu_{\phi}$, if and only if
\begin{equation}
\label{eq:conmisrate}
\frac{c(k,l)}{c(l{+}1,k{-}1)}=\frac{c(k,0)c(1,l)}{c(l{+}1,0)c(1,k{-}1)}\;\; \mbox{ for all  }k\geq 1,l\geq 0. 
\end{equation}
This is well known (see e.g. \cite{evans2014condensation,cocozza1985processus,fajfrova2016,rafferty2015monotonicity}), also for more general translation invariant dynamics under additional conditions on $c$. The marginals are given explicitly by
\begin{equation}
\label{eq:marg}
\nu_{\phi} [\eta_{x}=n]=\frac{1}{z(\phi)}w(n)\phi^{n}\quad\mbox{with}\quad w(n)=\prod_{k=1}^n \frac{c(1,k{-}1)}{c(k,0)},
\end{equation}
which are normalized by the partition function
\[
z(\phi):=\sum_{n=0}^{\infty}w(n)\phi^{n}\ .
\]
The parameter $\phi\geq 0$ is the fugacity controlling the average particle density
\begin{equation}
\label{eq:R}
R(\phi):=\sum_{n=0}^{\infty}n \nu_{\phi} [\eta_{x}=n]=\phi\partial_{\phi}\log(z(\phi)),
\end{equation}
which is a monotone increasing function of $\phi$ with $R(0)=0$. 
These distributions exist for all $\phi \in \mathcal{D} :=\{\phi \geq 0 : z(\phi)< \infty\}$. The domain is of the form $\mathcal{D} =[0,\phi_c]$ or $[0,\phi_c)$, where 
\[
\phi_c=(\limsup_{n\to\infty} w(n)^{1/n})^{-1}
\]
is the radius of convergence of $z(\phi)$, and we denote by
\begin{equation}
\rho_c=R({\phi}_c)\in [0,\infty]
\end{equation}
the maximal density for the family of product measures. 
If $\mathcal{D} =[0,\phi_c)$ then $\rho_c=\infty$, and if $\rho_c <\infty$ the model exhibits a condensation transition (see e.g.\ \cite{chleboun2014condensation}), which we discuss in more detail in Section \ref{sec:examples}.

\begin{prop}
	The single site marginal
	\begin{equation}
	\bar f^{\phi}_k :=\nu_{\phi}[\eta_x=k]\quad\mbox{for each }\phi\in\mathcal{D}
	\end{equation}
	is a stationary solution to \eqref{eq:mis_diff_f_k}, and a reversible distribution for $(X_t :t\geq 0)$.
\end{prop}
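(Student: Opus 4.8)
The plan is to use the fact, recorded in \eqref{bdgen}--\eqref{bdrates} and justified through Corollary~\ref{cor1}, that \eqref{eq:mis_diff_f_k} is the master equation of the birth death chain $(X_t:t\ge 0)$ with birth rates $\beta_k(t)=\sum_{l\ge 0}c(l,k)f_l(t)$ and death rates $\mu_k(t)=\sum_{l\ge 0}c(k,l)f_l(t)$. For any birth death chain a distribution $\bar f$ is reversible, and in particular stationary, exactly when the detailed balance relations $\bar f_k\,\beta_k=\bar f_{k+1}\,\mu_{k+1}$ hold for all $k\ge 0$. Substituting $f(t)\equiv\bar f^{\phi}$ freezes the a priori nonlinear rates at the constant values $\beta_k=\sum_l c(l,k)\bar f^{\phi}_l$ and $\mu_{k+1}=\sum_l c(k{+}1,l)\bar f^{\phi}_l$, so it suffices to verify these detailed balance identities for $\bar f^{\phi}$. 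This gives both claims at once: reversibility is then immediate, and the right hand side of \eqref{eq:mis_diff_f_k} telescopes to $\mu_{k+1}\bar f^{\phi}_{k+1}-\beta_k\bar f^{\phi}_k=0$, yielding stationarity.

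To reach detailed balance I would first establish the stronger pairwise identity
\begin{equation}\label{eq:pdb}
\bar f^{\phi}_k\,\bar f^{\phi}_l\,c(l,k)=\bar f^{\phi}_{k+1}\,\bar f^{\phi}_{l-1}\,c(k{+}1,l{-}1)\qquad\text{for all }k,l\ge 0,
\end{equation}
which expresses microscopic reversibility of the product measure $\nu_\phi$ under a single particle transfer from a site with $l$ particles to a site with $k$ particles. From the explicit marginals \eqref{eq:marg} one reads off the ratio $\bar f^{\phi}_{k+1}/\bar f^{\phi}_k=\phi\,c(1,k)/c(k{+}1,0)$, and hence for $l\ge 1$
\[
\frac{\bar f^{\phi}_{k+1}\bar f^{\phi}_{l-1}}{\bar f^{\phi}_k\bar f^{\phi}_l}=\frac{c(1,k)\,c(l,0)}{c(k{+}1,0)\,c(1,l{-}1)}\ .
\]
Thus \eqref{eq:pdb} is equivalent to $c(l,k)/c(k{+}1,l{-}1)=c(l,0)c(1,k)/\big(c(k{+}1,0)c(1,l{-}1)\big)$, which is precisely the product measure condition \eqref{eq:conmisrate} after interchanging the roles of $k$ and $l$. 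The boundary case $l=0$ is immediate, since $c(0,k)=0$ by \eqref{cassum} and $\bar f^{\phi}_{-1}=0$, so both sides of \eqref{eq:pdb} vanish.

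Summing \eqref{eq:pdb} over $l\ge 0$ then produces the detailed balance relation: the left side becomes $\bar f^{\phi}_k\sum_l c(l,k)\bar f^{\phi}_l=\bar f^{\phi}_k\beta_k$, while on the right the $l=0$ term drops out and the shift $l\mapsto l+1$ turns the remainder into $\bar f^{\phi}_{k+1}\sum_{m\ge 0}c(k{+}1,m)\bar f^{\phi}_m=\bar f^{\phi}_{k+1}\mu_{k+1}$. This is exactly $\bar f^{\phi}_k\beta_k=\bar f^{\phi}_{k+1}\mu_{k+1}$, settling both the stationarity and the reversibility claims.

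The only genuine content is the algebraic observation that \eqref{eq:conmisrate} is identical, after the relabeling $k\leftrightarrow l$, to the ratio identity forced by \eqref{eq:pdb}; I expect the main obstacle to be bookkeeping, namely tracking the index shift and the vanishing boundary terms so that the sum collapses correctly. Absolute convergence of all the series above, and hence finiteness of $\beta_k$ and $\mu_k$, is guaranteed by the sublinear bound \eqref{eq:lip} together with finiteness of the first moment $R(\phi)$ from \eqref{eq:R}, so no interchange of limits causes difficulty.
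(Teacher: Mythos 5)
Your proposal is correct and follows essentially the same route as the paper: both verify the detailed balance relation $\bar f^{\phi}_k\sum_l c(l,k)\bar f^{\phi}_l=\bar f^{\phi}_{k+1}\sum_l c(k{+}1,l)\bar f^{\phi}_l$ by combining the marginal recursion $\bar f^{\phi}_{k+1}/\bar f^{\phi}_k=\phi\,c(1,k)/c(k{+}1,0)$ from \eqref{eq:marg} with the product measure condition \eqref{eq:conmisrate} (applied with $k$ and $l$ interchanged) and an index shift in the sum. Your intermediate pairwise identity \eqref{eq:pdb} is just a term-by-term repackaging of the chain of equalities inside the paper's summed computation, and your telescoping argument for stationarity matches the paper's conclusion.
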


\begin{proof}
From \eqref{eq:marg}, we have the relation
\begin{equation}
\label{eq:relation}
\frac{c(k,0)}{c(1,k{-}1)}\bar f^{\phi}_k=\phi \bar f^{\phi}_{k{-}1}\quad\mbox{for all }k\geq 1\;\mbox{and }\phi\in \mathcal{D}\ .
\end{equation}
%with the usual convention $f^\phi_{-1} =0$ and $c(0,l)=0$ for all $l\geq 0$. 
%Then, for each $\phi\in \mathcal{D}$, the single site marginal $f^{\phi}_k := \nu_{\phi}[\eta_x=k]$ is a stationary, reversible solution of \eqref{eq:mis_diff_f_k}. 
%In fact, it is even a reversible distribution which
Then the detailed balance equations for the single site dynamics with birth an death rates \eqref{bdrates} follow as
\begin{align*}
\bar f^{\phi}_{k{+}1}\sum_{l\geq 0} c(k{+}1,l) \bar f^{\phi}_l
&=\sum_{l\geq 1} c(k{+}1,l{-}1) \bar f^{\phi}_{l{-}1} \frac{c(1,k)}{c(k{+}1,0)}\phi \bar f^{\phi}_k\\
&=\sum_{l\geq 1} c(l,k)\frac{c(1,l{-}1)}{c(l,0)}\phi \bar f^{\phi}_{l{-}1}\bar f^{\phi}_k\\
&=\bar f^{\phi}_k\sum_{l\geq 1} c(l,k)\bar f^{\phi}_l \ ,
\end{align*}
where in the first and last equality we use \eqref{eq:relation} and in the second equality we use \eqref{eq:conmisrate}. This implies in particular that $\bar f^{\phi}$ is a stationary solution of \eqref{eq:mis_diff_f_k}.
\end{proof}

%Hence, this satisfies the detailed balance equation. Hence, we also know that the process is reversible. 
Therefore, under condition \eqref{eq:conmisrate} we have an explicit stationary distribution for each value $\rho =m_1 (0)$ of the conserved first moment provided it is not larger than $\rho_c$, given by $\bar f^\phi$ with $\phi\in \mathcal{D}$ such that $R(\phi)=\rho$. Note that for $\phi =0$ we have $\bar f^0_k =\delta_{0,k}$ corresponding to $\rho =0$. 
In general, due to the restriction of birth death dynamics without long-range jumps, we expect all stationary distributions $f$ to be reversible for $(X_t :t\geq 0)$ and given by solutions to the detailed balance equations
\begin{equation}\label{ssdetbal}
\bar f_k \sum_l c(l,k)\bar f_l =\bar f_{k+1} \sum_l c(k+1,l)\bar f_l \quad\mbox{for all }k\geq 0\ .
\end{equation}
These equations are non-linear and if the rates do not obey condition \eqref{eq:conmisrate} we are not aware of a method to find explicit solutions in the general case. Based on the connection to the underlying particle system, we outline a possible approach below to establish at least existence of solutions.\\

Consider a sequence of particle systems $(\eta (t):t\geq 0)$ with generator (\ref{eq:GenMis}) in the thermodynamic limit, i.e.\ with a deterministic number of particles $N$ where $N/L\to\rho$. For each $L,N$ the process is a finite state Markov chain which under our conditions on the jump rates is irreducible on the subset of configurations with $N$ particles. Therefore the process has a unique stationary measure $\pi_{L,N}$ which is also reversible due to symmetric dynamics on the complete graph. Due to symmetry the marginal
\begin{equation}\label{rhomar}
\bar f^L_k :=\pi_{L,N} [\eta_x =k]
\end{equation}
is also independent of $x\in\Lambda$. Provided that $\bar f^L$ converges to a probability distribution $\bar f^\rho$ as $L,N\to\infty$, $N/L\to\rho$, this limit is a reversible distribution for $(X_t :t\geq 0)$ and fulfills \eqref{ssdetbal}. 
%This can be shown in complete analogy to the convergence in \eqref{ssgencon} in Section \ref{sec:cor2} using $\pi_{L,N}$ as initial distribution of the process, and is omitted. 
If condition \eqref{eq:conmisrate} holds then $\bar f^\rho =\bar f^\phi$ with $\rho =R(\phi )$, provided that $\rho\leq\rho_c$. 
This follows from the equivalence of the ensembles $\pi_{L,N}$ and $\nu_{\phi}$ in the limit $L\to\infty$, which has been established for these models in great detail (see e.g.\ \cite{chleboun2014condensation} and references therein). In the absence of condition \eqref{eq:conmisrate} we still expect convergence of the marginals \eqref{rhomar} under sublinearity conditions \eqref{eq:lip} on the jump rates and possible further assumptions, but this is an open question beyond the scope of this paper. We will mention an example in Section \ref{sec:examples} where both, \eqref{eq:lip} and convergence of the marginals are violated. While the above provides an approach to establish existence of stationary distributions identified as a particular limit, it cannot address uniqueness for a given first moment $m_1 (0)$, which remains a further interesting open problem. In general, existence and uniqueness of stationary distributions for non-linear birth death chains is a challenging problem where only partial results have been obtained so far (see e.g.\ \cite{feng1992solutions}).

\subsection{Initial conditions and ergodic behaviour\label{sec:ergo}}

Consider a fixed initial condition $f(0)$ for the limit equation \eqref{eq:mis_diff_f_k} with finite density $\rho =m_1 (0)\in (0,\infty )$ and $m_2 (0)<\infty$ as in \eqref{initialcon0}. Consider product measures $\nu^L$ with marginals $\nu^L [\eta_x =.]=f(0)$ so that simply $f^L (0)=f(0)$ for all $L\geq 1$. Then \eqref{initialcon0b} holds trivially, and \eqref{initialcon0c} holds for example if $f(0)$ has only finite support, i.e. $f_k (0)=0$ for $k\geq K$ for some $K>0$. If $f(0)$ has infinite support, we still have a weak law of large numbers for the empirical first and second moment appearing in \eqref{initialcon0c}. If $f(0)$ has exponential or high enough integer moments and we choose $\alpha_1 >\rho$ and $\alpha_2 >m_2 (0)$ large enough, the probability of $\eta (0)\not\in\Omega_\alpha$ vanishes as $L\to\infty$, and can be bounded easily for specific examples. The uniform bounds \eqref{initialcon0c} are used in the main estimates of expectations of \eqref{compu} and \eqref{eq:mart2} in Section \ref{sec:existence}. Since the functions $H$ and $H^2$ involved are bounded, the proof can be adapted by a simple split of the expectation into events $\eta (0)\in\Omega_\alpha$ and $\eta (0)\not\in\Omega_\alpha$, where the second is simply estimated by its probability multiplied by the bound of the function. 

%A natural corresponding sequence of initial conditions for the particle system that fulfill all conditions \eqref{initialcon0} to \eqref{initialcon0c} and asymptotic independence \eqref{initialcon1} is given by conditional product measures. 

Another generic choice fulfilling all conditions \eqref{initialcon0} to \eqref{initialcon0c} and permutation invariance \eqref{initialcon1} are conditional product measures with a fixed number of particles, i.e.
\begin{equation}\label{condma}
\mu_{L,N} =\nu^L \Big[\, .\,\Big|\sum_{x \in \Lambda} \eta_x =N ,\sum_{x \in \Lambda} \eta_x^2 \leq L\alpha_2\Big]\quad\textrm{and}\quad f^L(0)=\mu_{L,N}[\eta_x=\cdot]\ .
\end{equation}
Choosing again $N/L\to\rho$ in the thermodynamic limit, $f^L (0)\to f(0)=\nu$ as $L\to\infty$ weakly due to the equivalence of ensembles \cite{chleboun2014condensation}. The additional conditioning on the second moment can be included easily since this is a typical event in the limit, provided $\alpha_2 >m_2 (0)$ and $\nu$ has appropriate tails with finite second moment as mentioned above.
%The formulation of our main result requires i.i.d. initial conditions (\ref{initialcon1}), which provides permutation invariance of the dynamics and is otherwise used only in \eqref{indi}. Permutation invariance is also given under the conditional measures \eqref{condma}, and the condition introduces only a small negative correlation between different occupation numbers $\eta_x (0)$ and $\eta_y (0)$ of order $1/L$. This leads to a vanishing correction in \eqref{indi}, and the proof can be easily adapted to also cover initial conditions with a fixed number of particles.
A particular generic example of this form is to simply distribute $N$ particles uniformly at random, leading to binomial marginals
\begin{equation}
{N \choose k}\left(\frac{1}{L}\right)^k\left(1-\frac{1}{L}\right)^{N-k}=f^L_k(0) \to f_k(0)=\frac{\rho^k}{k!}e^{-\rho},
\end{equation}
with standard convergence to Poi($\rho$) variables as $N,L\to \infty$ and $N/L\to \rho$. As such this does not obey the uniform condition on the second moment, but choosing the arbitrary constant $\alpha_2$ large enough, binomial samples fulfill it with high probability approaching $1$ as $L\to\infty$.

Given the family of stationary measures $\bar f^\phi$ introduced in the previous Section, a natural question is that of ergodicity, i.e. for initial conditions $f(0)$ with first moment $\rho =m_1 (0)<\infty$, does $f(t)$ converge to $\bar f^\phi$ with $R(\phi)=\rho$? While contraction arguments may by possible for particular jump rates $c(k,l)$, we are not aware of general results on convergence to stationary solutions for non-linear dynamical systems that would answer this question. 
As mentioned before, on the restricted state space $\big\{\eta\in\Omega :\sum_{x \in \Lambda} \eta_x =N\big\}$ with a fixed number of particles the process $(\eta (t):t\geq 0)$ is a finite state, irreducible Markov chain, which is therefore ergodic and converges to its unique stationary distribution $\pi_{L,N}$. 
%The equivalence of ensembles for such models has been established (see e.g.\ \cite{chleboun2014condensation} and references therein) and ensures weak convergence
%\begin{equation}\label{equivalence}
%\pi_{L,N} [\eta_x =k]\to f_k^\phi \quad\mbox{as }L,N\to\infty ,\ N/L\to\rho
%\end{equation}
%provided that $R(\phi) =\rho \leq\rho_c$.
%The sequence of marginals $\pi_{L,N} [\eta_x =.]$ is uniformly integrable if and only if $\rho\leq\rho_c$, in which case convergence in \eqref{equivalence} holds also in $L^1$. 
%Due to ergodicity for a finite state Markov chain, we have
This implies
\[
f^L (t)\to \pi_{L,N} [\eta_x =.]\quad\mbox{as }t\to\infty
\]
for each finite $L$, which holds in total variation or $L^2$ distance. 
If the convergence $f^L (t)\to f (t)$ was uniformly in $t>0$, this could be used to establish ergodicity for the limit process. While we do not state explicit bounds on the distance of $f^L (t)$ and $f (t)$ in our proof, the crucial estimates are in \eqref{eq:limint}, \eqref{eq:limquadratic}, 
%is that of the quadratic variation in \eqref{eq:limquadratic} which controls convergence to the deterministic limit in \eqref{converge}, 
and lead to bounds proportional to $e^{Ct}/L$. They are clearly only directly useful for $t\ll \log L$ (in particular for all fixed $t>0$), and our proof does not provide uniform convergence. 
In general, ergodicity breaking is a well-known phenomenon in the presence of phase transitions, e.g. for the contact process uniqueness of the stationary distribution is lost in infinite volume (see e.g. \cite{Liggett1985InteractingParticleSystems} Chapter 6). 
For solutions to \eqref{eq:mis_diff_f_k}, however, we still expect ergodicity at least for $\rho \leq\rho_c$. Explicit heuristic scaling solutions for particular systems discussed in the next section suggest that ergodicity may hold even for $\rho >\rho_c$.\\

A possible approach to establish at least local stability of the stationary distribution could be to estimate relaxation times $t_{rel}$ for the underlying particle system. By standard path counting arguments (analogous to e.g.\ \cite{armendariz2015metastability} Section 4.2), they should be bounded above by a constant independently of the system size $L$ due to complete graph dynamics. The resulting error bounds of the form $e^{Ct_{rel}}/L$ would vanish in the limit $L\to\infty$ in order to establish convergence when starting close to stationarity. It is clear that far away from stationarity, e.g.\ putting all $N$ particles on a single site, mixing times have to be at least of order $L$.

%
%Note that our main result in Theorem \ref{thmfactorization} holds independently of condition \eqref{eq:conmisrate} and requires only a Lipschitz bound on the rates $c$ \eqref{eq:lip}. 
%Without condition \eqref{eq:conmisrate} we still expect a continuous family of stationary distributions for the birth death chain indexed by the first moment with similar ergodicity properties, but we are not aware of related results. Results on some particular cases of non-linear birth death chains can be found in \cite{karlin1957differential}.

\section{Examples of condensing particle systems\label{sec:examples}}
%***What's rigorous?***\\

%For condensing models with $\rho >\rho_c$, the above holds with $\phi =\phi_c <\infty$, which corresponds to a loss of mass in the condensate since the limit has only first moment $\rho_c <\rho$. 
%

To further illustrate the relevance of our results we discuss two classes of processes of type \eqref{eq:GenMis} that exhibit condensation and have attracted significant recent research interest. Not all cases are covered by our main theorem, but we include them to illustrate the possible irregular behaviour and non-existence of solutions to \eqref{eq:mis_diff_f_k} related to gelation in growth/aggregation models as explained below. The results on stationary measures in this section have been established on a rigorous level, while dynamic results and related scaling solutions to \eqref{eq:mis_diff_f_k} have only been studied heuristically so far. Nevertheless, we think it is instructive to include them as examples of particular solutions to the limit equation, which pose interesting research problems on a rigorous level.

\subsection{Zero-range processes\label{zrp}}
For zero-range processes (ZRP) the jump rates depend only on the occupation of the departure site, and we use the notation
\[
c(k,l)=g(k)\quad\mbox{with }g:\mathbb{N}\rightarrow[0,\infty)\mbox{ such that }g(k)=0\Leftrightarrow k=0\ .
\]
This leads to the mean-field equation \eqref{eq:mis_diff_f_k} taking the form
\begin{equation}
\frac{df_{k}(t)}{dt}=g(k+1)f_{k+1}(t)+{\bar{g}}(t)f_{k-1}(t)-(g(k)+{\bar{g}}(t))f_{k}(t),
\label{eq:zrp_diff_f_k}
\end{equation}
where $\bar{g}(t)=\sum_k g(k)f_k(t)$, valid for all $k\geq 0$ with the convention $f_{-1}(t)\equiv 0$. As before this is the master equation of a birth death chain with $k$-independent birth rate $\bar{g}(t)$ and time-independent death rate $g(k)$, which have been studied in \cite{feng1992solutions}. ZRPs satisfy \eqref{eq:conmisrate} for all choices of rates $g$ and exhibit stationary product measures of the form \eqref{eq:marg}.

An interesting example is given by the bounded jump rates
\begin{equation}
\label{rates}
g(k) =  
\left\{ \begin{array}{cl}
0 & \mbox{if }k=0, \\
1+\frac{b}{k^\gamma} & \mbox{if }k\geq 1,
\end{array} \right.
\end{equation}
with parameters $b>0$ and $\gamma\in(0,1]$, which obviously fulfill \eqref{eq:lip} and our result applies. For the measures \eqref{eq:marg} we have $\phi_c=1$ and stationary weights 
\begin{align*}
w(n)&=\prod_{k=1}^n \frac{k}{k+b} \sim n^{-b} &\mbox{for }&\gamma=1,\\
w(n)&=\prod_{k=1}^n \frac{k^\gamma}{k^\gamma+b} \sim \exp\Big( -\frac{C}{1-\gamma}n^{1-\gamma}\Big)\ ,\ C>0 &\mbox{for }&\gamma\in(0,1). 
\end{align*}
The symbol $\sim$ indicates asymptotic proportionality as $n\to\infty$, with a power law and a stretched exponential decay, respectively. 
These models have been studied in great detail (see e.g.\ \cite{godreche2003dynamics,grosskinsky2003condensation,armendariz2009thermodynamic,armendariz2013zero}), and we have $\rho_c<\infty$ when $\gamma\in (0,1)$ or $\gamma =1$ and $b>2$. If the density $\rho >\rho_c$ the system exhibits condensation, i.e.\ it phase separates into a bulk part at density $\rho_c$ and a condensate, where a finite fraction of all particles concentrates in a single site.  Accordingly, $\bar f^1$ is the stationary measure with maximal density $\rho_c$ of the birth death chain with master equation \eqref{eq:zrp_diff_f_k}. 

Intuitively, the dynamic mechanism of condensation in this model is due to the decreasing jump rates $g(k)$ leading to an effective attraction between particles on sites with a large occupation number. The system exhibits an interesting coarsening phenomenon, where over time the condensed phase concentrates on a decreasing number of lattice sites with increasing occupation numbers. While stationary measures are fully understood, there are only partial rigorous results so far on this dynamic question \cite{beltran2015martingale}, and it has been studied heuristically in \cite{godreche2003dynamics,godreche2016coarsening} and also \cite{jatuviriyapornchai2016coarsening} in terms of scaling solutions of \eqref{eq:zrp_diff_f_k}. 
While for initial conditions with $\rho =m_1 (0)\leq\rho_c$ ergodicity is expected to apply as discussed in Section \ref{sec:ergo}, for $\rho >\rho_c$ the solution to \eqref{eq:zrp_diff_f_k} is expected to phase separate into a bulk and a condensed part
\begin{equation}
\label{ansatz}  
%f_{k}(t)=\underbrace{f_k(t)\,\mathbb{I}_{[0,1/\sqrt{\epsilon_t}]} (k)}_{:= f^{\mathrm{bulk}}_k (t)}+\underbrace{f_k (t)\,\mathbb{I}_{(1/\sqrt{\epsilon_t},\infty)} (k)}_{:= f^{\mathrm{cond}}_k (t)}\ ,
f_{k}(t)=f^{\mathrm{bulk}}_k (t)+f^{\mathrm{cond}}_k (t)\ .
\end{equation}
For large times the bulk part of the distribution converges as
\begin{equation}
f^{\mathrm{bulk}} (t)\to \bar f^1\quad\mbox{as }t\to\infty\quad\mbox{with}\quad \sum_{k\geq 0}k\bar f_k^1 =\rho_c
\label{bulkon}
\end{equation}
in analogy with the discussion in Section \ref{sec:ergo}. The condensed part evolves indefinitely according to the scaling ansatz
\begin{equation}
\label{eq:scaling_f}
f^{\mathrm{cond}}_{k}(t)\simeq\epsilon_{t}^{2}h(u),\quad\mbox{with}\quad u=k\epsilon_{t}\quad\mbox{with scaling parameter}\quad\epsilon_{t}=t^{-\frac{1}{\gamma+1}}\to 0
\end{equation}
as $t\to\infty$. This part of the distribution concentrates on increasing occupation numbers of order $u/\epsilon_t$ describing the coarsening dynamics, and it contains a finite fraction of the mass
\begin{equation}\label{condiv}
\sum_{k\geq 1} kf^{\mathrm{cond}}_{k}(t) =\int_0^{\infty}uh(u)\, du=\rho-\rho_c\ .
\end{equation}
Plugging the ansatz into \eqref{eq:zrp_diff_f_k} leads heuristically to differential equations characterizing $h(u)$ as studied in \cite{godreche2003dynamics} for $\gamma =1$ and in \cite{godreche2016coarsening} for $\gamma<1$, which exhibit a unimodal bump corresponding to the mass distribution in the condensed phase.

%with a scaling parameter $\epsilon_t \to 0$ explained below. 
%The condensed part $f^{\mathrm{cond}} (t)$ vanishes pointwise as $t\to\infty$, taking the scaling form 
%For $\gamma=1$, the scaling function $h$ satisfies the second-order linear differential equation
%\begin{equation}\label{hequ}
%h''(u)+\left(\frac{1}{2}u-A+\frac{b}{u}\right)h'(u)+\left(1-\frac{b}{u^{2}}\right)h(u)=0
%\end{equation}
%with additional constraint 
%$h(u)\to 0$ as $u\to\infty$ and normalization
%The solutions have been discussed in detail in \cite{godreche2003dynamics}, and show a unimodal bump corresponding to the mass distribution in the condensed phase with a total density of $\rho-\rho_c$, equaling the excess mass which is missing in the bulk part. For $\gamma<1$, the analogue to \eqref{hequ} is more complicated and a detailed analysis is provided \cite{godreche2016coarsening}. 

With the first moment being conserved, the simplest characterization of condensation dynamics is given by the second moment of the occupation numbers, $m_2(t)$. Using the scaling ansatz \eqref{ansatz}, \eqref{eq:scaling_f} and computing $\mathcal{L}_{f(t)} k^2$, with \eqref{bulkon} and \eqref{condiv} this is dominated by the condensed part and diverges as a power law
\[
m_2(t)=\sum_{k\geq 0} k^2f_k(t) \sim \epsilon_t^{-1} =t^{\frac{1}{\gamma+1}} \quad\mbox{as }t\to\infty .
\]
Note that in a finite particle system with large but fixed $L$, the coarsening regime eventually saturates to the stationary behaviour with a single condensate site remaining. From detailed rigorous results on phase separation for canonical stationary measures $\pi_{L,N}$ in the thermodynamic limit (see e.g.\ \cite{armendariz2009thermodynamic}), we know that for $\gamma\in (0,1)$ or $\gamma =1$ and $b>3$ the second moment is dominated by the condensate and behaves as
%We can estimate how the second moment scales with system size at stationarity from a typical empirical average over the particle system
\[
\bar m_2 :=\E \Big[\frac{1}{L}\sum_{x\in\Lambda} \eta_x^2 \Big] = L(\rho -\rho_c )^2 (1+o(1))\quad\mbox{as }L,N\to\infty\ ,\ N/L\to\rho >\rho_c \ .
\]
%which is again dominated by the condensate. 
Equating $m_2 (t)=\bar m_2$ we can heuristically estimate the expected saturation time to scale as $L^{1+\gamma }$ with the system size $L$.

\subsection{Explosive condensation processes and gelation\label{ecp}}
Explosive condensation processes (ECP) have been introduced in \cite{waclaw2012explosive} and further studied in \cite{evans2014condensation,chau2015explosive} on a heuristic level. The jump rates are of the form
\begin{equation}\label{ECPrates}
c(k,l)={k}^{\lambda}(d+l^{\lambda})\quad\mbox{with parameters }\lambda >0\mbox{ and }d>0 ,
\end{equation}
diverging super-linearly with occupation numbers on departure and target site for $\lambda >1$. For $\lambda=1$ this model is called the inclusion process which has been studied on a rigorous level in \cite{grosskinsky2011condensation,grosskinsky2013dynamics}, and which is also covered by our result due to sublinear rates. 
%While our result does not apply to unbounded rates, \eqref{eq:mis_diff_f_k} still represents the only possible limit dynamics for $f_k$, and we expect convergence to actually hold at least as long as it has a unique solution. 
ECPs are also related to aggregation models with collision kernels corresponding to $c(k,l)$, which have attracted significant research interest (see e.g.\ \cite{chau2015explosive,esenturk2017} and references therein). 

The rates \eqref{ECPrates} satisfy condition \eqref{eq:conmisrate} and we have product measures of the form \eqref{eq:marg} with $\phi_c=1$ and 
\begin{align}
%\begin{array}{cl}
w(n)&=\frac{\Gamma(d+n)}{n!\Gamma(d)}\sim n^{d-1} &\mbox{for } \lambda=1,\nonumber\\
w(n)&=\prod_{k=1}^n\frac{(k-1)^\lambda+d}{k^\lambda} \sim n^{-\lambda} &\mbox{for } \lambda\neq 1 \label{tails}
%\end{array} 
\end{align}
for all $d\geq 0$. 
Therefore, $\rho_c < \infty$ for $\lambda >2$ and again we expect $f(t)\to \bar f^{\phi}$ as $t\to \infty$ for all initial conditions with $m_1(0)=\rho \leq \rho_c$. 
If $\rho>\rho_c$, we do not expect a scaling solution as for ZRPs this time, but a behaviour corresponding to instantaneaous loss of mass in the distribution $f(t)$ so that $m_1 (t)=\rho_c <\rho$ for all $t>0$. This has not been studied in general for $d>0$, but there exists strong heuristic evidence for $d=0$ as presented below.

The exchange-driven growth model studied in \cite{ben2003exchange} corresponds to rates \eqref{ECPrates} in the degenerate case $d=0$, and provides a detailed heuristic analysis of the scaling solution for the condensed part. Note that in this case $w(n)=\delta_{0,n}$ and the mean-field equation has an absorbing state corresponding to $\bar f_k=\delta_{0,k}$ as the only stationary distribution for all $\lambda >0$, effectively setting $\rho_c =0$. Still, $m_1(t)$ is conserved and the dynamics of the particle system is not irreducible, more and more lattice sites empty over time and cannot get occupied again thereafter. 
%The limiting master equation \eqref{eq:mis_diff_f_k} can be written as
%\begin{equation}
%\label{eq:ECP_diff_f_k}
%\frac{d}{dt}f_k(t)=m_\lambda(t)\big[ (k+1)^\lambda f_{k+1}(t)+(k-1)^{\lambda}f_{k-1}(t)-2k^{\lambda}f_k(t)\big] ,
%\end{equation}
%for all $k\geq 0$, using $f_{-1}(t)\equiv0$. This involves the moment $m_\lambda(t)$ which can be absorbed in a time change $\tau_t=\int_0^t dt'm_{\lambda}(t')$, leading to a standard birth death chain with symmetric rates $k^\lambda$. 
Since $\rho_c =0$ all initial conditions with $\rho =m_1 (0)>0$ lead to phase separated solutions of the form \eqref{ansatz}, 
%\begin{equation}
%f_k(t)=f_k^{\mathrm{bulk}}(t)+f_k^{\mathrm{cond}}(t),
%\end{equation}
now with $f_k^{\mathrm{bulk}}(t)\to \delta_{k,0}$. 
The results reported in \cite{ben2003exchange} refer to $f_k^{\mathrm{cond}}$, which for $\lambda <2$ again exhibits a scaling form as $t\to\infty$,
\begin{equation}
f_{k}^{\mathrm{cond}}(t)=\epsilon_{t}^{2}h(u),\quad\mbox{with}\quad u=k\epsilon_{t}\ ,\quad \epsilon_{t}=\tau_t^{-\alpha}\quad\mbox{and}\quad \alpha=\frac{1}{2-\lambda}\ ,
\end{equation}
on a changed time scale $\tau_t=\int_0^t dt'm_{\lambda}(t')$. 
The scaling function again satisfies a second-order linear differential equation, 
%\begin{equation}
%(2-\lambda)\frac{d^2}{du^2}(u^{\lambda}h(u))+u\frac{d}{du}h(u)+2h(u)=0,
%\end{equation}
%subject to normalization, which has an explicit solution
%\begin{equation}
%h(u)=\frac{(2-\lambda)^{2/(2-\lambda)}}{\Gamma(1/2-\lambda)}u^{1-\lambda}\exp\left(-\frac{x^{2-\lambda}}{(2-\lambda)^2}\right).
%\end{equation}
and for $\lambda >2$ there is no solution to the limit dynamics \eqref{eq:mis_diff_f_k}, which exhibits instantaneous blow up of second moments -- also called gelation in the context of aggregation models (see e.g. \cite{ball1986becker}). On the level of the particle system this corresponds to the explosive condensation phenomenon studied in \cite{waclaw2012explosive,evans2014condensation,chau2015explosive} for $d>0$, where the time to reach the condensed state vanishes with increasing system size even in one-dimensional geometries. On the complete graph with $d=0$ the behaviour can again be characterized through the second moment, as reported in \cite{ben2003exchange} we have as $t\to\infty$
\begin{equation}\label{exdriven}
m_2(t) \sim  
\left\{ \begin{array}{cl}
t^{\beta} &,\ \lambda <3/2 \\
\exp(Ct)&,\ \lambda=3/2\mbox{  for some }C>0 \\
(t_c-t)^{\beta} &,\ 3/2<\lambda< 2\mbox{  for some }t_c >0 \\
\infty &,\ \lambda >2
\end{array} \right.\ .
\end{equation}
The dynamical exponent for the power law cases above is given by $\beta=(3-2\lambda)^{-1}$, and for $\lambda >3/2$ the system exhibits finite-time blow up at time $t_c$, which vanishes for $\lambda \nearrow 2$. 
%The boundary case $\lambda=2$ shows interesting multiscaling behaviour as discussed in Section 3B \cite{ben2003exchange}. Note that for $d>0$ with \eqref{tails} only $\lambda >2$ leads to $\rho_c <\infty$ and condensation is always explosive as mentioned above.
For the inclusion process with $\lambda =1$ additional duality methods are available, and the above analysis can actually be made rigorous \cite{prep}, with recent results also on regular lattices instead of complete graphs \cite{carinci2017exact}.

\section{Discussion \label{sec:discussion}}

We have established the mean-field equation \eqref{eq:mis_diff_f_k} as the limit dynamics of empirical measures and single sites for stochastic particle systems, which provides an important ingredient for a rigorous approach to the coarsening dynamics of condensing stochastic particle systems. 
We have already outlined the relevance and possible future directions of research in Sections \ref{sec:properties} and \ref{sec:examples}, and we shortly discuss some restrictions and further possible generalizations below.

%Our result holds under arguably quite restrictive conditions, which we discuss in detail in the following.

\begin{itemize}
%	\item Theorem \ref{thmfactorization} is formulated for i.i.d.\ initial conditions \eqref{initialcon1}, and we have discussed in Section \ref{sec:properties} how this can be extended to conditional product measures which introduce vanishing correlations and are permutation invariant. In our proof, permutation invariance is only used to establish existence in Section \ref{sec:existence}. This makes use of \eqref{eq:f^LP} implying that the single site process $\eta_x (t)$ provides a realization of the limiting birth death chain. Since all estimates in Section \ref{sec:existence} hold uniformly in $x$, a similar argument can be used to establish tightness for the empirical process $(F_k (\eta_t):k\geq 0)$. This would allow for non-permutation invariant initial conditions with vanishing correlations and a result on convergence of $f_k^L(t)$.
	
	\item Mean-field equations \eqref{eq:mis_diff_f_k} are often used as approximations in other geometries such as symmetric or asymmetric dynamics on $d$-dimensional regular lattices. As usual, the larger the dimension the better the approximation, see e.g.\  \cite{godreche2005dynamics,chau2015explosive,evans2014condensation} for details. Since our result does not involve any time scaling, mean-field averaging of the birth and death rates \eqref{bdrates} is achieved by a diverging number of neighbours of each lattice site. This is a crucial ingredient in our proof and in fact essential for any rigorous derivation of \eqref{eq:mis_diff_f_k} without time scaling. Our arguments for empirical measures could be directly extended to graphs which are not complete but have a version of the above property.
	
	\item Condensing stochastic particle systems exhibit several time scales diverging with the system size. For ZRPs this has been studied heuristically in \cite{godreche2005dynamics}, 
	some of which have been identified recently also on a rigorous basis including hydrodynamics \cite{stamatakis2015hydrodynamic} and also metastable dynamics of the condensate \cite{beltran2012metastability,armendariz2015metastability}. 
	As we discussed in Section \ref{sec:properties} convergence in our result does not hold uniformly in time, and error estimates vanish on time scales at most of order $\log L$. This is shorter than the expected saturation time of order $L^{1+\gamma}$ explained at the end of Section \ref{zrp}, and the crossover from coarsening to stationary behaviour is not described by solutions of \eqref{eq:mis_diff_f_k}. Instead, the scaling solutions as discussed in Section \ref{sec:examples}, can be interpreted as approximations to the condensed phase on infinite regular lattices.

	\item The example of ECPs in Section \ref{ecp} with $\lambda >2$ shows that some growth conditions on the rates are necessary for convergence to \eqref{eq:mis_diff_f_k} to hold for positive times $t>0$. In case of instantaneous blow up, the single site process $\eta_x (t)$ also does not have well-defined limit dynamics for any $t>0$. The condition $\lambda <3/2$ for power law scaling without blow up \eqref{exdriven} is in fact fully compatible  with the crucial estimate \eqref{eq:limquadratic} on the quadratic variation to vanish in the limit $L\to\infty$, since this is of order $L^{2\lambda -3}$ for rates of the form \eqref{ECPrates}. This points to an interesting generalization of our main result for ECPs including also stochastic limit dynamics for $3/2<\lambda <2$ with explosion (corresponding to blow up in the deterministic equation), which is current work in progress.
	
	\item For bounded jump rates there is an alternative proof of our main result using a coupling with branching processes via the graphical construction of the process. This allows a direct approach to single site dynamics and the propagation of chaos without the law of large numbers for empirical measures, but requires a significantly stronger restriction on the rates. This proof can be found in \cite{mim2017thesis}.
	
%	Boundedness of jump rates \eqref{eq:cbounded} is the most restrictive condition that we expect to be not necessary for the limit result to hold, but which would require a significant extension of our proof including e.g.\ a priori bounds on occupation numbers to use cut-off arguments. 

\end{itemize}

\section*{Acknowledgements}
We are grateful to I.\ Armend\'ariz and M.\ Jara for helpful advice on the proof of the main theorem. S.\ G.\ acknowledges partial support from the Engineering and Physical Sciences Research Council (EPSRC), Grant No.\ EP/M003620/1. Part of this work has been carried out during a stay at the Institut Henri Poincare - Centre Emile Borel during the trimester 'Stochastic Dynamics Out of Equilibrium', and we are grateful for the institute's hospitality and support.

%\bibliographystyle{IEEEtran}
%\bibliography{reference}

\end{document}